\pdfoutput=1

\documentclass[11pt,a4paper]{amsart}  
\usepackage[utf8]{inputenc}
\usepackage[T1]{fontenc}
\usepackage{mathtools}
\usepackage{amssymb}
\mathtoolsset{showonlyrefs=true} 
\usepackage{times}  
\usepackage{xspace}
\usepackage[final]{microtype} 
\usepackage{paralist} 
\usepackage{enumitem}
\usepackage[pdfusetitle,colorlinks,bookmarksopen=true,bookmarksnumbered=true,linkcolor=blue,urlcolor=blue,citecolor=blue]{hyperref}
\usepackage{lineno}
\usepackage{pgfplots}
\pgfplotsset{compat=newest}
\usetikzlibrary{patterns}
\usepgfplotslibrary{fillbetween}
\usepackage{tikzscale}
\usepackage{colortbl}
\usepackage{booktabs}
\usepackage{algorithm}
\usepackage[noend]{algpseudocode}
\usepackage[textsize=tiny,shadow]{todonotes}
\usepackage{ stmaryrd }
\usepackage{setspace}
\newtheorem{theorem}{Theorem}[section]
\newtheorem{lemma}[theorem]{Lemma}

\newtheorem{proposition}[theorem]{Proposition}
\newtheorem{corollary}[theorem]{Corollary}

\theoremstyle{definition}
\newtheorem{definition}[theorem]{Definition}

\newtheorem{remark}[theorem]{Remark}

\DeclareMathOperator{\dist}{\mathrm{dist}}

\newcommand{\eps}{\varepsilon}

\newcommand{\NN}{\mathbb{N}}
\newcommand{\RR}{\mathbb{R}}
\newcommand{\CC}{\mathbb{C}}
\newcommand{\ZZ}{\mathbb{Z}}
\newcommand{\II}{\mathbb{I}}
\newcommand{\JJ}{\mathbb{J}}

\newcommand{\im}{{\rm im}}
\let\subsec\S
\renewcommand{\S}{\Sigma}
\newcommand{\W}{{\mathcal W}}
\newcommand{\C}{{\mathcal C}}

\DeclareMathOperator{\supp}{supp}
\DeclareMathOperator{\diam}{diam}

\hyphenation{Schrö-ding-er}
\hyphenation{Di-rich-let}

\hypersetup{
pdfcreator={LaTeX with hyperref (master, b3d7486)},
pdfkeywords={Finite section method, limit operators, Jacobi operators},
pdfsubject={MSC Primary 65J10, 47B36; Secondary 47N50},
}

\newcommand{\nofigure}[2]{
\smallskip
\noindent
\refstepcounter{figure} 
\includegraphics[width=\textwidth]{#1}
\begin{spacing}{0.8}
{\footnotesize \noindent {\bf Figure \thefigure.} 
#2\\
}
\end{spacing}
}

\newcommand{\nonofigure}[3]{
\smallskip
\noindent
\refstepcounter{figure} 
\begin{tabular}{lr}
\includegraphics[width=0.46\textwidth]{#1}&
\includegraphics[width=0.46\textwidth]{#2}\\
\end{tabular}
\begin{spacing}{0.8}
{\footnotesize \noindent {\bf Figure \thefigure.} 
#3\\
}
\end{spacing}
}

\begin{document}
\title[Spectral approximation via approximation of subwords]{Spectral approximation\\of generalized Schrödinger operators\\via approximation of subwords}

\author[Gabel, Gallaun, Gro{\ss}mann, Lindner, Ukena]{Fabian Gabel,\, Dennis Gallaun,\, Julian Gro{\ss}mann,\\Marko Lindner,\, Riko~Ukena}
\address{
	Hamburg University of Technology,
	Institute of Mathematics,
	Am Schwarzen\-berg-Campus 3,
	D-21073 Hamburg,
	Germany}
\email{\href{mailto:fabian.gabel@tuhh.de}{fabian.gabel@tuhh.de}}
\email{\href{mailto:dennis.gallaun@tuhh.de}{dennis.gallaun@tuhh.de}}
\email{\href{mailto:julian.grossmann@jp-g.de}{julian.grossmann@jp-g.de}}
\email{\href{mailto:marko.lindner@tuhh.de}{marko.lindner@tuhh.de}}
\email{\href{mailto:riko.ukena@tuhh.de}{riko.ukena@tuhh.de}}

\keywords{non-self-adjoint Schrödinger operators, spectrum, pseudospectra}
\subjclass{Primary 47A10; Secondary 47B28, 47B80, 47N50}

\begin{abstract}
We demonstrate criteria, purely based on finite subwords of the potential, to guarantee spectral inclusion as well as Hausdorff approximation of pseudospectra or even spectra of generalized Schrödinger operators on the discrete line or half-line. In fact, our results are neither limited to Schrödinger or self-adjoint operators, nor to Hilbert space or 1D.
\end{abstract}

\maketitle

\section{Introduction}\label{sec:intro}
\subsection*{The operators and their matrices}
Discretizing the (self-adjoint) 1D Schrödinger operator $\Delta+ v\cdot$ by finite differences, one gets an infinite tridiagonal matrix of the form $A$ below, where the sequence $(b_k)_{k\in\ZZ}$ comes from samples of the function $v$.

When we speak of \emph{generalized Schrödinger operators} here, we think of bounded linear operators on $\ell^2(\ZZ)$ with matrix representation of the form $B$, where
\[
A=\begin{pmatrix}
\ddots & \ddots\\
\smash\ddots & b_{-1} & 1 \\
& 1 & b_0 & 1\\
&& 1 & b_1 & 1\\
&&& 1& b_2&\smash\ddots\\
&&&&\ddots &\ddots
\end{pmatrix},
\quad
B=\begin{pmatrix}
\ddots&\ddots\\
\smash\ddots& *&\bullet\\
\smash\ddots& b_{-1}&*&\bullet\\
& \star &b_0&*&\bullet\\
&& \star &b_1&*&\smash\ddots\\
&&&\ddots&\ddots&\ddots
\end{pmatrix}
\]
and $\star$, $*$ and $\bullet$ mark constant (but possibly different) diagonals.
The number of nonzero diagonals is not important, as long as it is finite, and the one varying diagonal, carrying the so-called \emph{potential} $b=(b_k)_{k\in\ZZ}$, could be anywhere. In Section \ref{sec:ext} below we explain how (and how far) also these limitations can be overcome. 
We denote the generalized Schrödinger operator with matrix $B$ by $H(b)$.

\subsection*{Our results}
The aim of this paper is to demonstrate how the set $\W(b)$ of all finite \emph{subwords} (vectors of consecutive entries, a.k.a.~\emph{factors}) of $b$ determines spectral quantities of $H(b)$, such as the spectrum, the pseudospectra and the smallest singular value. 
We compare these spectral quantities between two operators, $H(b)$ and $H(c)$, and prove their approximation by those of a sequence, $H(b_m)$, purely by looking at the sets $\W(b)$, $\W(c)$ and $\W(b_m)$. More precisely:
\newpage

\begin{itemize}
\item
If $\W(b)=\W(c)$, then all aforementioned spectral quantities of $H(b)$ and $H(c)$ coincide, irrelevant at which position a subword $w\in\W(b)$ appears in $c$. 
For example, $H(b)$ with a random $\{0,1\}$-potential has the same spectrum as $H(c)$ with $c\in\{0,1\}^\ZZ$ listing all natural numbers in binary form.

\item
We explicitly compare the same quantities of one band operator, between the axis and the half-axis (with homogeneous Dirichlet boundary condition).

\item
We approximate the pseudospectrum (and in the normal case also the spectrum) of $H(b)$ in the Hausdorff distance by the pseudospectra of $H(b_m)$, where $b_m$ and $b$ have the same subwords of length $N$ and where $N=N(m)\to\infty$ if $m\to\infty$.

\item
For the corresponding approximation result on the half-axis, we have to assume, in addition, that $b_m\to b$ pointwise.
\end{itemize}

We demonstrate our approximations of spectra and pseudospectra for different configurations including the self-adjoint and a non-self-adjoint Fibonacci Hamiltonian, 
the non-self-adjoint Anderson model, Feinberg \& Zee's randomly hopping particle and a one-way version of it with two varying diagonals. 
The potentials in these examples are aperiodic or pseudoergodic~\cite{Gabel.2023}.
See Sections~\ref{sec:classes} and~\ref{sec:ex_math_phys} and Figure~\ref{fig:intro} for the details.

Our results might not be too surprising for experts on self-adjoint Schrödinger operators $H(b)=A$ from above, see~\cite{Beckus.2020, Kellendonk.2019},
but our tools are such that everything works as well for the generalized version $H(b)=B$, also with more than one varying diagonal, 
and in $\ell^p(\ZZ^d)$ with entries in a Banach space $X$, see Section~\ref{sec:ext}.

\nonofigure{tikz/Hopping3/3_state_ohne_standalone}{tikz/FibHamNSA/FibHamNSA_scaled}
{Pseudospectral approximations of the 3-state randomly hopping particle \cite{FeinZee97,CicutaContediniMolinari2000} 
and a non-self-adjoint version of the Fibonacci Hamiltonian~\cite{Damanik.2016}; 
see \subsec\ref{sec:ex_math_phys} for details and other operators. \label{fig:intro}}
~\\[-16mm]  

\subsection*{Our message.}
Maybe our message is that spectral approximation is about approximating $\|(A-\lambda)^{-1}\|$, not necessarily $(A-\lambda)^{-1}$. 
In particular, it is not about cutting larger and larger portions out of the operator, 
but instead about capturing the variety of its finite subpatterns: Finding all the $2^N$ subwords of length $N$ in a random $\{0,1\}$-potential $b$ can take unpredictably large sections and create huge numerical costs. Arranging all of them in a new ``surrogate'' potential $c$ takes $N\cdot 2^N$ letters if arranged naively, and just $2^N$ if in a clever, condensed arrangement \cite{deBruijn}.

So if a model $H(b)$ allows a priori knowledge of the set $\W(b)$ of finite subwords then we suggest to build a surrogate potential 
that has all those subwords of length $N$ (but no others), maybe neatly overlapping, and to periodize it.
Doing this for an increasing sequence $N=N_1, N_2,\dots$, this will Hausdorff-approximate the pseudospectrum of $H(b)$ -- in normal models also the spectrum. 
Spectra of periodic band operators can be explicitly computed by so-called Floquet-Bloch theory, see e.g.~\cite[Thm.~4.4.9]{Davies.2007} (two-sided case) and \cite[Thm.~4.42]{HaggerPhD} (one-sided case).

\subsection*{Related literature.} 
Always a great inspiration is \cite{Davies2001:PseudoErg} by Brian Davies. 
The way he expresses one essence of random behavior via pseudoergodicity -- 
maximal variety of subwords -- is guiding our view on spectral theory, 
not only here and not only for random operators. 
Aperiodic models are also characterized by their subword variety but now it is minimal (among the nontrivial classes). Spectral theory of aperiodic operators, see e.g.~ \cite{Suto.1987,Bellissard.1989,DamanikLenz2006,Stollmann,Damanik.2015,Damanik.2016}, was actually our starting point for this research.
We learned about asymptotics of pseudospectra, in particular in the Hausdorff topology, 
from Böttcher, Hagen, Roch and Silbermann \cite{BoeSi2,Hagen.2001}
 -- going from concrete results for Toeplitz and related operators to more generic statements for band-dominated operators.
One of our main tools, reducing spectral studies to finite column sets -- a.k.a.~one-sided or rectangular finite sections -- 
is the localization of the lower norm (Lemma~\ref{lem:nuN}), developed in \cite{CW.Heng.ML:UpperBounds,BigQuest} and extended in~\cite{LiSchmidt:Givens}.

In parallel, Ben-Artzi, Colbrook, Hansen, Nevanlinna, Roman and Seidel \cite{SCI,ColbRomanHansen:PRL} also looked at spectral approximations via rectangular submatrices but combined with so-called $(N,\eps)$-pseudospectra \cite{Hansen:nPseudo,Seidel:Neps}. Of particular relevance are their complexity studies for many computational, including spectral, problems.

Of course, this list is incomplete; many other groups are studying spectral quantities of non-self-adjoint operators, e.g.~\cite{Boulton08,Boegli2017,Boegli2020,Jacob21}, in terms of pseudospectra but also numerical ranges, higher order spectra, polynomial hulls, etc.

\subsection*{Structure of the paper}
After introducing the main actors, our philosophy and the results in this introduction, we give proper definitions of the operator classes and discuss concrete examples from mathematical physics in Section \ref{sec:examples}. Sections \ref{sec:notations} and \ref{sec:results} introduce our main tools and techniques and finally prove our results.

\section{Spectra, operators and examples} \label{sec:examples}
\subsection{Spectrum and pseudospectra}
Given a bounded linear operator $A$ on a Banach space $X$, we denote its \emph{spectrum} and \emph{pseudospectra} \cite{TrefEmb}, respectively, by
\[
\sigma(A)\ :=\ \{\lambda\in\CC: A-\lambda \text{ is not invertible}\}
\]
and
\begin{equation} \label{eq:speps}
\textstyle
\sigma_\eps(A)\ :=\ \{\lambda\in\CC: \|(A-\lambda)^{-1}\|>\frac 1\eps \},\qquad \eps>0,
\end{equation}
where we identify $\|B^{-1}\|:=\infty>\frac 1\eps$ if $B$ is not invertible, so that $\sigma(A)\subseteq \sigma_\eps(A)$ for all $\eps>0$.

\subsection{Band operators} 
We focus on operators on $\ell^2(\ZZ)$. 
Our basic building stones are \emph{multiplication operators}
$(M_ax)_n=a_nx_n$ with some $a\in\ell^\infty(\ZZ)$ and 
the \emph{shift operator}, $(S x)_n = x_{n-1}$, both acting on $\ell^2(\ZZ)$.

A \emph{band operator} is a finite sum of finite products of those two ingredients,
i.e.,
\begin{equation*}
A\ =\ \sum_{k=-w}^w M_{a^{(k)}}S^k
\end{equation*} 
with all $a^{(k)}\in\ell^\infty(\ZZ)$. The number $w\in\NN$ here is called the \emph{band-width} of $A$.

We identify an operator $A$ on $\ell^2(\ZZ)$ with its usual \emph{matrix representation} $(A_{ij})_{i,j \in \ZZ}$ with respect to the canonical basis in $\ell^2(\ZZ)$. Band operators exactly correspond to infinite band matrices with bounded diagonals.

Band operators act boundedly on every $\ell^p(\ZZ)$ with $p\in[1,\infty]$. 
Even their invertibility and spectrum are independent of $p$ (\cite[5.27]{Kurbatov.1999} and \cite[Cor.~2.5.4]{RaRoSi:Book}):
\begin{proposition}\label{prop:kurbatov}
    Let $B$ be a band operator on $\ell^p(\II)$ for some $\II \subseteq \ZZ$ and $p \in [1,\infty]$.
    If~$B$ is invertible, then $B$ is also invertible as an operator on $\ell^q(\II)$ for all $q \in [1,\infty]$.
\end{proposition}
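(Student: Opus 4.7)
The plan is to sandwich the class of band operators inside a larger normed algebra $\W$ that is simultaneously and continuously embedded in the bounded operators on $\ell^p(\II)$ for \emph{every} $p\in[1,\infty]$, and then to argue that invertibility inside $\W$ is already detected by invertibility on a single $\ell^p(\II)$.

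First, introduce the \emph{Wiener-type algebra} $\W$ of all formal series $A=\sum_{k\in\ZZ} M_{a^{(k)}}S^k$ with $\|A\|_\W:=\sum_{k\in\ZZ}\|a^{(k)}\|_\infty<\infty$, where $M_a$ and $S$ are multiplication and shift as defined in the excerpt (restricted to $\II$ in the obvious way). Using $\|M_a\|=\|a\|_\infty$ and $\|S\|=1$ on every $\ell^p(\II)$, one immediately gets $\|A\|_{\ell^p\to\ell^p}\le\|A\|_\W$ for all $p\in[1,\infty]$, so $\W$ embeds continuously into each $\ell^p(\II)$-operator algebra. Any band operator trivially lies in $\W$ because only finitely many of its diagonals are nonzero.

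Second -- and this is the technical heart -- I would show that $\W$ is \emph{inverse-closed} with respect to $\ell^p(\II)$: if $B\in\W$ is invertible as a bounded operator on $\ell^p(\II)$ for some $p$, then $B^{-1}\in\W$. For a banded $B$ the natural route is off-diagonal decay of the matrix entries of $B^{-1}$: the Demko-Moss-Smith theorem supplies an exponential bound $|(B^{-1})_{ij}|\le C\rho^{|i-j|}$ with $\rho<1$ and $C<\infty$ depending only on $\|B\|$, $\|B^{-1}\|$ and the band-width; summing over diagonals yields $\|B^{-1}\|_\W<\infty$. Alternatively, the abstract framework of Jaffard/Sjöstrand/Baskakov for matrix algebras with off-diagonal decay gives the same conclusion and works uniformly in $p$.

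Third, once $B^{-1}\in\W$ is in hand, the embedding of $\W$ into the bounded operators on $\ell^q(\II)$ yields $\|B^{-1}\|_{\ell^q\to\ell^q}\le\|B^{-1}\|_\W<\infty$ for every $q\in[1,\infty]$. The identities $BB^{-1}=B^{-1}B=\boldI$ hold on the dense subspace $\cfin(\II)$ of finitely supported vectors and extend by continuity to all of $\ell^q(\II)$ for $q<\infty$; the case $q=\infty$ is handled by a weak-$*$ density argument starting from $\ell^1(\II)$. Thus $B$ is invertible on $\ell^q(\II)$, its inverse being the $\ell^q(\II)$-realization of $B^{-1}\in\W$.

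\textbf{Main obstacle.} The crux is Step two: neither boundedness nor the band structure of $B$ survives under inversion, but sufficient off-diagonal decay does, and this decay is precisely what must be quantified. Either of the two routes outlined above rests on classical but nontrivial estimates, which is exactly what is packaged in the cited results of Kurbatov and of Rabinovich-Roch-Silbermann.
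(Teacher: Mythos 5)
The paper does not actually prove this proposition---it is quoted from the literature, with the citations to Kurbatov~5.27 and Rabinovich--Roch--Silbermann Cor.~2.5.4 standing in for a proof---and your sketch reconstructs essentially the argument behind those citations: embed band operators into the Wiener algebra $\W$, establish inverse-closedness via off-diagonal decay of $B^{-1}$, and read off boundedness of the inverse on every $\ell^q(\II)$, so your route is the standard one and is correct in outline. The one point worth flagging is that the classical Demko--Moss--Smith and Jaffard/Sj\"ostrand/Baskakov statements detect invertibility on $\ell^2$, whereas the hypothesis here is invertibility on a single \emph{arbitrary} $\ell^p(\II)$ (possibly $p=1$ or $p=\infty$); the decay estimate must therefore come from a $p$-robust argument---for instance, conjugating $B$ by the exponential weights $e^{\alpha n}$ multiplies the $k$-th diagonal by $e^{k\alpha}$, hence perturbs the band operator in norm by $O(\alpha)$, so invertibility persists for small $|\alpha|$ and yields $|(B^{-1})_{ij}|\le C e^{-\alpha|i-j|}$ on any $\ell^p(\II)$---which is precisely the content of the cited theorem of Kurbatov rather than of the $\ell^2$ results you name.
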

So when we focus on $p=2$ for spectral analysis, that is not much of a restriction.
Many of our results will be formulated for the following class of band operators.

\subsection{Generalized Schrödinger operators}
Discretizing the 1D Schrödinger operator $\Delta+ v\cdot$ by finite differences, one gets a tridiagonal self-adjoint matrix with only one varying diagonal: the main diagonal. We generalize this as follows:

\begin{definition}[Generalized Schrödinger operator] \label{def:gSchrö}
For $b\in\ell^\infty(\ZZ)$, let $H(b)$ denote 
the \emph{generalized (discrete) Schrödinger operator},
\begin{equation} \label{eq:gSchroe} 
H(b)\ :=\ H(b,L,\gamma)\ :=\ L\ +\ S^\gamma M_b\,,
\end{equation}
on $\ell^2(\ZZ)$, where $L$ is a fixed translation invariant ($LS=SL$) band operator and $\gamma\in\ZZ$. The sequence $b$ is called \emph{the potential} of $H(b)$.
\end{definition}

The matrix of $L$ has constant diagonals, only finitely many of them nonzero. We add the sequence $b$ to the $\gamma$-th diagonal of $L$, resulting in $H(b)$. In particular, $H(b)$ is a band operator. Denote its bandwidth by $w$ throughout.
The standard and self-adjoint (discrete) Schrödinger operator is a special case of $H(b)$,
when $L=S+S^{-1}$, $\gamma=0$, $w=1$ and $b$ is real-valued.

\subsection{Classes of potentials considered here}\label{sec:classes}
Our examples have either pseudoergodic or aperiodic potentials. We explain these in the language of finite subwords.

\subsection*{Words} 
An \emph{alphabet} is a non-empty compact set $\S\subseteq\CC$. 
The elements of an alphabet are called letters.
For $n\in\NN$, vectors $w=(s_1,\dots,s_n)\in\S^n$ are referred to as \emph{words} over $\S$. We also write $w=s_1\dots s_n$ or $w:\{1,\dots,n\}\to\S$ with $w(k)=s_k$ and call $|w|:=n$ the \emph{length} of $w$.

Together with the empty word, $\epsilon$ with $|\mathcal \epsilon|=0$, $\S^0 = \{\epsilon\}$, and the operation of word concatenation, $(v_1,\dots,v_m)(w_1,\dots,w_n):=(v_1,\dots,v_m,w_1,\dots,w_n)\in\S^{m+n}$, the set $\S^* := \cup_{n=0}^\infty\ \S^n$ of all finite words forms a monoid -- the so-called \emph{free monoid over $\S$}.
We also study infinite words $w\in\S^\II$ with infinite $\II\subseteq\ZZ$, understood as $w:\II\to\S$.
For words $w\in\S^*$, $b\in\S^\II$ with $\II\in\{\ZZ,\NN\}$ and $N\in\NN$, we write
\\[-4mm]
\begin{itemize} \itemsep0mm
\item[$\circ\ $] $\mathrm{pos}(w,b):=\{k\in \II: b(k)\dots b(k+|w|-1)=w\}$,
\item[$\circ\ $] $\#(w,b):=|\mathrm{pos}(w,b)|$,\ \ meaning the number of occurrences,
\item[$\circ\ $] $\W(b) \ :=\ \{w\in\S^*:\#(w,b)\ge 1\}$, the set of all finite subwords of $b$, and
\item[$\circ\ $]  $\W_N(b):=\{w\in\W(b):|w|=N\}$.
\end{itemize}

\subsection*{Pseudoergodicity}
$b\in\S^\ZZ$ is called \emph{pseudoergodic} over the finite alphabet $\S$
if $\W_N(b)=\S^N$ for every $N\in\NN$.

This notion was introduced by Davies~\cite{Davies2001:PseudoErg} (and has been successfully employed
since then, e.g.~\cite{Davies2001:PseudoErg,LiBook,Chandler-WildeLindner.2011,Chandler-WildeLindner.2016,Colb:PE_pBC}) in order to capture spectral properties of random operators while eliminating stochastic details. Our results are very much in line with this philosophy of connecting spectral properties purely with subword
variety instead of locations, probability and distribution.

This spirit is reflected by the (anything but random) construction of our approximants: For $m\in\NN$, let $b_m$ be the periodic extension of a concatenation of all elements of $\S^m$. 
Listing those $|\S|^m$ words of length $m$ takes $m|\S|^m$ letters if done naively and $|\S|^m$ in a clever condensed arrangement as a de Bruijn sequence~\cite{deBruijn}; 
for example, the word $u=000\,001\,010\,011\,100\,101\,110\,111$ and
the cyclic word $v=00010111$ both contain all $w\in\{0,1\}^3$.
For both arrangements, Theorem~\ref{thm:approxspec} applies, 
so that $\sigma_\eps(H(b_m))\to \sigma_\eps(H(b))$,
in particular, condition~\eqref{eq:ev_equalsubwords} holds with $m_0:=N$.

\subsection*{Aperiodicity}
Let $\S=\{0,1\}$. Also \emph{aperiodicity} of $b\in\S^\NN$ is characterized by the element count of $\W_N(b)$: But, unlike for pseudoergodicity, where this count is $2^N$ (hence, maximal), for aperiodicity it is $N+1$ (which is minimal among not eventually periodic sequences, by the Morse-Hedlund theorem \cite{CovenHedlund}). 

A sequence $b\in\S^\ZZ$ is aperiodic if both its restrictions, to the negative and to the non-negative half-axis, are aperiodic. 
This is not equivalent to $|\W_N(b)|=N+1$; the latter is already satisfied by bi-infinite sequences as simple as $\chi_{\{0\}}$ or $\chi_\NN$, 
where $\chi_J$ is the characteristic function of a set $J\subseteq\ZZ$.

One of the most famous aperiodic words is the \emph{Fibonacci word} $b\in\S^\ZZ$ with
\begin{equation} \label{eq:Fibo}
b(n)\ =\ \chi_{[1-\alpha,1)}(n\alpha \bmod 1),\qquad n\in\ZZ,
\end{equation}
where $\alpha=\frac 12(\sqrt 5-1)$.

Writing down periodic approximants $b_m$ to an aperiodic $b$ such that \eqref{eq:ev_equalsubwords} holds with an explicitly given $m_0$ is not as simple as in the pseudoergodic case since putting ``legal'' subwords $u,v\in\W_N(b)$ one after the other often creates ``illegal'' ones, $w\not\in\W_N(b)$, in the transition zone from $u$ to $v$. 
For the Fibonacci word, $b_m$ can be constructed via~\eqref{eq:Fibo} with $\alpha$ replaced by the continued fraction expansion of $\alpha$, truncated after $m$ divisions~\cite{Gabel.2021}. Below is another approach for the Fibonacci word and some of its relatives.

\subsection*{Words generated by primitive substitutions}
Let us also come to a third class of potentials $b$ that has some overlap with the second, for example, the Fibonacci word \eqref{eq:Fibo}. 
Take a map $M:\Sigma \to \Sigma^* \setminus \{\epsilon\}$ and extend it via concatenation,
$M(uv):=M(u)M(v)$, to an endomorphism on $\Sigma^*$ and even on $\Sigma^\NN$.
We call such a morphism $M$ a \emph{substitution} if it satisfies 
\begin{enumerate}[label=(\roman*)]
 \item\label{it:subs11} There is an $a\in\Sigma$ with $M(a)=au$ for some $u\in\Sigma^*$,
 \item For all $c\in\Sigma$, $|M^n(c)| \to \infty$ for $n\to\infty$.
\end{enumerate}
Then $M^n(a)$ converges pointwise to the \emph{substitution word} $d \in \Sigma^\NN$, a fixed point of $M$.
If there is a $k\in\ZZ_+$ such that for all $c_1,c_2 \in\Sigma$, $c_2$ is a subword of $M^k(c_1)$, then we call $M$ \emph{primitive}.
For the Fibonacci word, the primitive substitution is given by 
\begin{equation}
M_{\text{Fib}}:\quad 0\mapsto 1, \qquad 1\mapsto 10.
\end{equation}

Other famous primitive substitutions on $\{0,1 \}^*$ include the \emph{Thue-Morse} substitution,
\begin{equation}
M_{\text{TM}}:\quad 0\mapsto 01, \qquad 1\mapsto 10,
\end{equation}
and the \emph{period doubling} substitution, 
\begin{equation}
M_{\text{PD}}:\quad 0\mapsto 01, \qquad 1\mapsto 00.
\end{equation}
 
To obtain two-sided infinite words $b\in\Sigma^\ZZ$ from a substitution word $d\in\Sigma^\NN$, we take accumulation points of $S^{-k}(d)$, $k\to\infty$, where $S$ is the right-shift.
These are exactly the words $b\in\Sigma^\ZZ$ that satisfy $\W(b) = \W(d)$.
For a given word $b\in \Sigma^\ZZ$ 
that is generated by a primitive substitution as described above, we can approximate spectral quantities of $H(b)$ by those of $H(b_m)$,  with $b_m=M^m(a)$ for $a\in\Sigma$ with~\ref{it:subs11}, 
see also~\cite{Beckus.2020}.

\subsection{Operator examples from mathematical physics} \label{sec:ex_math_phys}
\subsection*{Anderson model: self-adjoint and non-self-adjoint}
The famous Anderson model~\cite{Anderson58} of 1958 studies localization and delocalization of eigenvectors for a better understanding of electric conductivity in 1D disordered media.
In this model, one looks at the self-adjoint Schrödinger operator $H(b)$ from~\eqref{eq:gSchroe} on the axis with
$L=S+S^{-1}$, $\gamma=0$, $b\in\S^\ZZ$ pseudoergodic and $\S\subseteq \RR$. 
It is straightforward to prove (e.g.~\cite{LindnerRoch2010}) that $\sigma(H(b))=\S+[-2,2]$.
\medskip

In the late 1990s, the Anderson model reemerged in a non-self-adjoint (NSA) setting: 
The only change is that now $L=e^gS+e^{-g}S^{-1}$, where $g>0$ is the strength of an external magnetic field. 
The now also famous paper of Hatano and Nelson~\cite{HatanoNelson1997} looks at flux lines in type II superconductors under the influence of a tilted external magnetic field.
Within short time, the NSA Anderson model reappeared in population dynamics~\cite{NelsonShnerb} and other areas.

Mathematicians from stochastics~\cite{GoldKoru} and spectral theory~\cite{Davies2001:SpecNSA,Davies2001:PseudoErg,MartinezThesis} 
studied its spectrum and how it invades the complex plane, and the subject of pseudospectra (see~\cite{TrefEmb} and the references therein) received an additional uplift.

\nofigure{tikz/Anderson/Anderson_png}{
Here is an approximation of the pseudospectrum of the NSA Anderson model with $e^g=\frac 12$, i.e.~$H(b)$ with  $L=\frac 12S+ 2S^{-1}$, $\gamma=0$,  
$\S=\{-3,3\}$ and $b\in\S^\ZZ$ pseudoergodic. We see in blue the pseudospectrum of $H(b_{14})$~\cite{Gabel.2023}, 
where $b_m$ is a periodic sequence over $\S$ containing all subwords of $b$ of length $m$. By Theorem~\ref{thm:approxspec}, 
$\sigma_\eps(H(b_m))$ approximates $\sigma_\eps(H(b))$ in Hausdorff distance as $m\to\infty$. For the spectrum, 
known bounds, e.g.~from Theorem 14 in \cite{Davies2001:PseudoErg}, are displayed: dark orange is guaranteed to be spectrum, 
bright orange shows how far the spectrum could go at most.
}
\label{fig:NSAAnderson}

\subsection*{Quasicrystals and the Fibonacci Hamiltonian: self-adjoint and non-self-adjoint}
Again, the question was electric conductivity but now the medium was not disordered (random) but fairly ordered (periodic) -- or maybe not quite -- when Shechtman~\cite{Shechtman.1984} discovered the first so-called quasicrystal in 1982 in his laboratory.

Mathematicians were excited \cite{Suto.1987,Bellissard.1989,Stollmann,Damanik.2015,Damanik.2016} to see Hamiltonians with zero measure Cantor spectrum that was purely singular continuous, and finally, in 2011, after years of being called a quasiscientist, Shechtman received the Nobel Prize.
The phrase ``aperiodic'' was coined to label this scenario of slightly disorderly order.

The most famous model in 2D is the Penrose tiling, whose spectral analysis is so notoriously difficult that analysts resort to 1D and, e.g., the Fibonacci Hamiltonian~\cite{Damanik.2015}. 
In Figures \ref{fig:FibHam} and~\ref{fig:NSAFibHam}, we approximate both, the standard (self-adjoint) and a modified (non-self-adjoint) Fibonacci Hamiltonian $H(b)$, by periodic potentials $b_m$ that have $\W_N(b_m)=\W_N(b)$ for $N=N(m)\to\infty$ as $m\to\infty$. By Theorem \ref{thm:approxspec}, $\sigma_\eps(H(b_m))$ approximates $\sigma_\eps(H(b))$ in Hausdorff distance as $m\to\infty$. In the self-adjoint case, the same holds for spectra.

\nofigure{tikz//FibHam/FibHam_standalone}{
Here is an approximation of the spectrum of the standard (self-adjoint) Fibonacci Hamiltonian, $H(b)$ with  $L=S+S^{-1}$, $\gamma=0$ and  $b$ from~\eqref{eq:Fibo}. 
The spectra of the periodic approximations (here shown in blue), $H(b_m)$, are a union of finitely many closed intervals and Hausdorff-approximate, by Theorem~\ref{thm:approxspec} (the normal case), 
the spectrum of $H(b)$, which is known to be a Cantor set on the real line~\cite{Damanik.2016}.
Here we show, stacked in the vertical direction, many copies of the real line together with an $m$ axis to better envisage this approximation, $\sigma(H(b_m))\to \sigma(H(b))$ as $m\to\infty$.
}
\label{fig:FibHam}

\nofigure{tikz/FibHamNSA/FibHamNSA_scaled}{
In contrast to Figure~\ref{fig:FibHam}, here is an approximation of a NSA Fibonacci Hamiltonian. 
The potential $b$ from~\eqref{eq:Fibo} was multiplied by minus the imaginary unit. 
We see the spectrum (dark blue) and the $\varepsilon$-pseudospectrum (light blue, $\varepsilon=10^{-4}$) of $H(b_{18})$~\cite{Gabel.2023}.
}
\label{fig:NSAFibHam}
\pagebreak

\subsection*{Feinberg \& Zee's randomly hopping particle}
Motivated by 
\cite{HatanoNelson1997},
other NSA models popped up in the late 1990s, for example, 
Feinberg \& Zee's paper~\cite{FeinZee97} on localization and delocalization in models of a randomly hopping particle on a 1D grid. 
The particle can jump one node to the left or right, and it has a state (e.g.~spin) in $\S=\{\pm 1\}$ that changes randomly at every jump. 
Also the model was studied where the state changes randomly when jumping right but not when jumping left.
By a similarity transform, the two particles can be shown to have identic spectra.

\nofigure{tikz/Hopping2/Hopping12_condensed}{
Approximation of $\sigma(H(b))$ for the hopping particle with $L=S$, $\gamma=-1$, $\S=\{\pm 1\}$ and $b\in\S^\ZZ$ pseudoergodic. 
We see the spectrum and pseudospectrum of $H(b_{12})$~\cite{Gabel.2023}.
}\label{fig:hop2}

The spectrum looks self-similar; it is invariant under many transformations \cite{Hagger:symm} but it is not explicitly known. 
See~\cite{HolzOrlZee,CWChML:3diag,Hagger:dense,Hagger:symm,CW.Hagger} for extensive studies, including provable subsets and supersets. 
Figure~\ref{fig:hop2} shows a spectral approximation of $H(b)$ by $H(b_m)$ with our periodic approximation $b_m$ of a pseudoergodic $b$.

\nonofigure{tikz/Hopping3/3_state_linien_condensed}{tikz/Hopping3/3_state_sechseck}{
Approximation of $\sigma(H(b))$ for the hopping particle with $q=3$ states, with $L=S$, $\gamma=-1$, $\S=\{z\in\CC:z^3=1\}$ and $b\in\S^\ZZ$ pseudoergodic. 
On the left we see the union of spectra of $H(b_m)$ for $m=1,\ldots,6$~\cite{Gabel.2023}, each $m$ corresponds to a different shade of blue.
On the right the pseudospectrum of $H(b_{6})$  in blue and the numerical range of $H(b)$ in yellow.}\label{fig:hop3}

Cicuta, Contedini and Molinari \cite{CicutaContediniMolinari2000} generalized \cite{FeinZee97} to a particle with $q\in\NN$ different states, say $\S=\{z\in\CC:z^q=1\}$.
Many results of the case $q=2$ are preserved, see \cite{CWDavies2011,Weber.2022}. 
The numerical range (an upper bound on the spectrum) is a regular $2q$-polygon with outer radius 2 and center at the origin, see Figure \ref{fig:hop3}.

\subsection*{A one-way model}
Brezin, Feinberg and Zee \cite{BrezinZee,FeinZee97} also modeled a random particle that can only jump in one direction and then randomly change its state, a so-called one-way model.
The spectral analysis is simpler as the matrix is only supported on two adjacent diagonals. That's why the spectrum is explicitly known, 
see~\cite{TrefContEmb} for one constant and one random diagonal and~\cite{LiBiDiag} for two stochastically independent random diagonals.
We display a spectral approximation of the latter case in Figure~\ref{fig:bidiag}.

\nofigure{tikz/Bidiagonal/bidiagonal_png}{
Here is an approximation of the one-way model $H(b,c):=M_b+SM_c$ with two pseudoergodic diagonals $b\in\S_b^\ZZ$ and $c\in\S_c^\ZZ$. 
Here we use $\S_b=\{-2,2\}$ and $\S_c=\{3,4\}$. 
This case is still subject to Proposition~\ref{prop:approxspec}.
We see in blue the pseudospectrum of $H(b_7,c_7)$~\cite{Gabel.2023}, where $b_m$ and $c_m$ are periodic 
and the pair $(b_m,c_m)$ contains all $\S_b\times\S_c$-subwords of $(b,c)$ of length $m$. 
In orange, we see $\sigma(H(b,c))$ based on~\cite{LiBiDiag}.
}
\label{fig:bidiag}

\section{Further notations and tools} \label{sec:notations}

\subsection*{Discrete intervals and submatrices}
We use the following abbreviations for discrete intervals. Given $a,b\in\ZZ$, we write
\begin{eqnarray*}\itemsep0mm
a..b&\ :=\ &\{n\in\ZZ:a\le n\le b\},\\
a..&\ :=\ &\{n\in\ZZ:a\le n\},\\
..b&\ :=\ &\{n\in\ZZ:n\le b\}.
\end{eqnarray*}

For an operator $A:\ell^2(\ZZ)\to\ell^2(\ZZ)$ with matrix representation $(A_{ij})_{i,j\in\ZZ}$
and a discrete interval $\II\subseteq\ZZ$, we abbreviate the restriction of $A$ to $\ell^2(\II)$,
\[
A|_{\ell^2(\II)}\ :\ \ell^2(\II)\to\ell^2(\ZZ),
\]
by $A|_\II$. 
The corresponding matrix representation is $(A_{ij})_{i \in\ZZ,j\in\II}$.
Together with our shorthands for discrete intervals, this explains the notations
$A|_{a..b},\ A|_{a..}$ and $A|_{..b}$.

Furthermore, the operator $\ell^2(\NN)\to\ell^2(\NN)$, corresponding to $A^+:= (A_{ij})_{i,j\in\NN}$, is called the \emph{compression of $A$ to $\NN$}.

\subsection*{Approximate equality and bounds}
We, moreover, find the following notations useful: 
For $a,b\in\mathbb R$ and $\eps>0$, let us write $a\stackrel\eps\approx b\ $ if $\ b\in (a-\eps,a+\eps)$ 
and $a\stackrel\eps\preceq b\ $ if $\ b\in [a,a+\eps)$.
In particular, if $a \stackrel\delta\approx b$ and $b \stackrel\eps\approx c$ then $a \stackrel{\delta+\eps}\approx c$.
The same holds for the relation $\stackrel\eps\preceq$.

\subsection*{The lower norm}
An important spectral quantity that we use a lot in our arguments is the so-called \emph{lower norm} of an operator $A$ on $\ell^2(\II)$, meaning
\begin{equation}
\nu(A)\ :=\ \inf \{\|Ax\|: x\in \ell^2(\II), \|x\|=1\}.
\end{equation}
Note that it is not a norm and that the name ``lower norm'' is used, as in \cite{LiBook, RaRoSi:Book}, to address its role as a counterpart to the operator norm, $\|A\|\coloneqq \sup_{\|x\|=1}\|Ax\|$. 

$\nu(A)$ turns out to be a fairly accessible quantity to study $\|A^{-1}\|$. Indeed,
\begin{equation} \label{eq:invnu}
\|A^{-1}\|\ =\ 1/\min\big\{\,\nu(A),\nu(A^*)\,\big\}.
\end{equation}
Here $A^*$ is the adjoint of $A$, and $\nu(A)=\infty$ if and only if $A$ is not invertible. 
In Hilbert space, 
$\nu(A)$ is the smallest singular value of $A$. For normal $A$, it is the smallest (in modulus) spectral value, 
\begin{equation} \label{eq:nu_distspec}
\nu(A)=\dist(0,\sigma(A)),
\quad\text{whence}\quad
\nu(A-\lambda)=\dist(\lambda,\sigma(A)).
\end{equation}
For non-normal operators, we have ``$\le$'' instead in both equalities of~\eqref{eq:nu_distspec}.

For band operators, $\nu(A)$ can be conveniently approximated / localized via 
\begin{equation} \label{eq:localnu}
  \nu_N(A)\searrow \nu(A)\quad\text{as}\quad N\to\infty,
\end{equation}
where $\nu_N(A)$, for $N\in\NN$, refers to the \emph{local lower norm}, defined as
\begin{equation} \label{eq:nu_n}
\nu_N(A)\ \coloneqq\ \inf\big\{\|Ax\|: x\in\ell^2(\II), \|x\|=1, \diam(\supp(x))<N\big\},
\end{equation}
where $\supp(x) = \{k \in \II : x_k \neq 0\}$ and $\diam(S)=\sup_{s,t\in S}|s-t|$. 
Together with \eqref{eq:invnu} we get, as $N\to\infty$,
\begin{equation} \label{eq:nu_n2}
\min\big\{\nu_N(A),\nu_N(A^*)\big\}\ \searrow\ \min\big\{\nu(A),\nu(A^*)\big\}\ =\ 1\,/\,\|A^{-1}\|.
\end{equation}
In Lemma \ref{lem:self2} below we will see for which kinds of operators $A$ we can
neglect the terms involving $A^*$ in \eqref{eq:invnu} and \eqref{eq:nu_n2}.
For 
the approximation of spectral quantities, it is important to know that \eqref{eq:localnu} holds in a very uniform sense, as follows:
\begin{lemma}[{\cite[Prop. 6]{BigQuest}}] \label{lem:nuN}
Let $\eps >0$, $r>0$ and $w\in\NN$.
Then there is an $N\in\NN$ such that, for all band operators $A$ with band-width less than $w$ and $\|A\| < r$, 
\begin{equation*}
\nu(A) \stackrel\eps\preceq \nu_N(A)\,.
\end{equation*}
\end{lemma}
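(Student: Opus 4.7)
The direction $\nu_N(A)\ge\nu(A)$ is immediate, since the infimum defining $\nu_N(A)$ runs over a subset of unit vectors. The content of the lemma is therefore the uniform upper bound $\nu_N(A)<\nu(A)+\eps$, so my plan focuses there. The idea is to pick a near-optimal unit vector $x$ with $\|Ax\|\le \nu(A)+\eps/3$ and then \emph{localise} it by a smooth partition of unity, in the spirit of the IMS formula: a truncation that is smeared on a scale $N$ has a commutator with $A$ of order $w/N$, so one loses only a controlled amount when passing to vectors with support of diameter $<N$.

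Concretely, for a parameter $N$ to be fixed, I take a standard smooth $\ell^\infty$-partition of unity $\{\phi_k\}_{k\in\ZZ}$ with $\sum_k \phi_k^2\equiv 1$, each $\phi_k\ge 0$ supported in a discrete interval of length less than $N$, with bounded overlap (each point in at most two supports), and $|\phi_k(i)-\phi_k(j)|\le C|i-j|/N$; suitably shifted cosine- or tent-type functions do the job. Writing $A(\phi_k x)=\phi_k(Ax)+[A,M_{\phi_k}]x$, using $\|u+v\|^2\le (1+\theta)\|u\|^2+(1+\theta^{-1})\|v\|^2$, and invoking $\sum_k \phi_k^2=1$, one obtains
\[
  \sum_k \|A(\phi_k x)\|^2\ \le\ (1+\theta)\|Ax\|^2 + (1+\theta^{-1})\sum_k \|[A,M_{\phi_k}]x\|^2.
\]
Since $\sum_k \|\phi_k x\|^2=1$, a standard averaging argument selects an index $k_0$ for which the normalised vector $\phi_{k_0}x/\|\phi_{k_0}x\|$ is a unit vector with support of diameter less than $N$---hence admissible for $\nu_N(A)$---and whose $A$-image has squared norm at most the right-hand side above.

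The heart of the proof is a \emph{uniform} commutator estimate of the form
\[
  \sum_k \|[A,M_{\phi_k}]x\|^2\ \le\ K(w,r)/N^2,
\]
with $K(w,r)$ depending only on $w$ and $r$, not on $A$ or $x$. Entry-wise, $([A,M_{\phi_k}]x)_i=\sum_{|j-i|<w} A_{ij}(\phi_k(j)-\phi_k(i))x_j$: the band-width hypothesis restricts $|i-j|$ to $<w$, the Lipschitz bound forces $|\phi_k(j)-\phi_k(i)|\le Cw/N$, and each diagonal of a band operator of band-width $<w$ and norm $<r$ has $\ell^\infty$-norm $\le C_0(w)r$, so $|A_{ij}|\le C_0(w)r$ uniformly. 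Coupling these with the bounded-overlap property of $\{\phi_k\}$ and a Cauchy--Schwarz step yields the stated $N^{-2}$ decay.

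Given this, I first choose $\theta$ small (in terms of $\eps$ and $r$), then $N$ large enough that $(1+\theta^{-1})K(w,r)/N^2$ is of order $\eps^2$, and finally take $x$ with $\|Ax\|\le \nu(A)+\eps/3$; combining the bounds yields $\nu_N(A)<\nu(A)+\eps$ for every $A$ in the class. The main obstacle is precisely this uniformity: for any fixed $A$ one easily finds an $N$ making $\nu_N(A)$ close to $\nu(A)$, but here $N$ must be chosen a priori from $w$, $r$ and $\eps$ alone. The commutator estimate above delivers exactly this, because it uses only the band-width hypothesis and the norm bound on $A$, not any finer structural or spectral information.
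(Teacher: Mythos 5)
The paper does not actually prove this lemma --- it imports it verbatim as \cite[Prop.~6]{BigQuest} --- so your proposal is being measured against that reference rather than against an in-paper argument. Your IMS-type localization is a correct and fairly standard route to the statement, and it does deliver the one thing that matters: uniformity of $N$ over the class. The commutator bound uses only the band-width (confining $|i-j|<w$), the entrywise bound $|A_{ij}|\le\|A\|<r$, and the Lipschitz constant $C/N$ of the cut-offs, so $K(w,r)$ and hence $N$ depend on $(\eps,r,w)$ alone. Two points should be written out if you expand the sketch. First, in $\sum_k\|[A,M_{\phi_k}]x\|^2$ you need that for each fixed index $i$ only a bounded number of $k$ contribute (your bounded-overlap hypothesis gives this once $N>2w$, say); without that, the sum over $k$ could eat the $N^{-2}$ gain. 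Second, in the final step the passage from $(1+\theta)\|Ax\|^2$ to $(\nu(A)+2\eps/3)^2$ must be uniform in $\nu(A)$; this works because $\nu(A)\le\|A\|<r$, so $\theta=\theta(\eps,r)$ can be fixed in advance, and then $N$ is chosen so that $(1+\theta^{-1})K(w,r)/N^2\le\eps^2/2$, say. (You correctly note that the other inequality, $\nu(A)\le\nu_N(A)$, is trivial.) Compared with the cited proof, which localizes a near-minimizer using sharp cut-off projections and a pigeonhole argument on where its mass sits, your smooth-partition version is arguably cleaner but is genuinely tied to $p=2$: the identities $\sum_k\phi_k^2\equiv 1$ and $\sum_k\|\phi_kx\|^2=\|x\|^2$ exploit the Hilbert-space structure, whereas \cite[Prop.~6]{BigQuest} holds on all $\ell^p$, $1\le p\le\infty$, and for band-dominated operators --- a generality the paper explicitly relies on in Section~\ref{sec:ext}. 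For the $\ell^2(\II)$ setting in which the lemma is used here, your argument is sound and complete in outline.
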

One can explicitly quantify $N$ vs.~$\eps$. 
An analogous localization of the operator norm is in~\cite[Prop.~3.4]{HagLiSei}.

For an even finer localization of the lower norm, put, for $\II\subseteq\ZZ$,
\begin{equation} \label{eq:nu-lr}
\nu_\II(A)\ :=\ \inf\{\|Ax\|:\supp(x)\subseteq \II, \|x\|=1\}.
\end{equation}
\begin{corollary} \label{cor:nuN}
For every band operator $A$ and all $\eps>0$, there are $l,r\in\ZZ$ such that
$\nu(A)\stackrel\eps\preceq\nu_{l..r}(A)$.
\end{corollary}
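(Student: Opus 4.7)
The statement follows almost immediately from Lemma~\ref{lem:nuN} once one notices that the $\diam$-constraint in the definition of $\nu_N(A)$ (see~\eqref{eq:nu_n}) can be turned into a support constraint in a window of length $N$. My plan is to make this bridge precise and then chain two $\eps/2$-approximations.

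\textbf{Step 1.} Observe that for every $x\in\ell^2(\ZZ)$ with $\|x\|=1$ and $\diam(\supp(x))<N$, the support is contained in the interval $\supp(x)\subseteq l..l+N-1$ with $l:=\min\supp(x)$. Conversely, any unit vector $x$ supported in some window $l..l+N-1$ satisfies $\diam(\supp(x))\le N-1<N$. Therefore
\[
\nu_N(A)\ =\ \inf_{l\in\ZZ}\nu_{l..l+N-1}(A),
\]
which gives the precise translation between the two localizations.

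\textbf{Step 2.} Given $\eps>0$, apply Lemma~\ref{lem:nuN} (with any $r>\|A\|$ and $w$ larger than the band-width of $A$, using $\eps/2$ instead of $\eps$) to produce $N\in\NN$ such that
\[
\nu(A)\ \stackrel{\eps/2}{\preceq}\ \nu_N(A).
\]
By Step~1, the infimum $\nu_N(A)=\inf_{l\in\ZZ}\nu_{l..l+N-1}(A)$ is approached, so there exists $l\in\ZZ$ with
\[
\nu_{l..l+N-1}(A)\ <\ \nu_N(A)+\eps/2.
\]
Setting $r:=l+N-1$ and chaining the two inequalities yields $\nu_{l..r}(A)<\nu(A)+\eps$. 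The reverse inequality $\nu_{l..r}(A)\ge\nu(A)$ is immediate since the set of admissible vectors in~\eqref{eq:nu-lr} is a subset of the unit sphere in $\ell^2(\ZZ)$. Together these give $\nu(A)\stackrel{\eps}{\preceq}\nu_{l..r}(A)$, as desired.

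\textbf{Main obstacle.} There is no serious obstacle; the only point that requires any care is the first step, where one has to justify passing from ``small diameter of support'' to ``support inside a concrete window''—in particular that the infimum over all such windows really coincides with $\nu_N(A)$, and that choosing a near-minimizing window is legitimate (the infimum need not be attained). Once this bookkeeping is out of the way, the rest is a routine $\eps/2+\eps/2$ argument built on Lemma~\ref{lem:nuN}.
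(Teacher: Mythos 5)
Your proof is correct and follows essentially the same route as the paper: choose $N$ via Lemma~\ref{lem:nuN} with $\eps/2$, rewrite $\nu_N(A)$ as $\inf_{l}\nu_{l..l+N-1}(A)$, and pick a near-minimizing window to absorb the remaining $\eps/2$. Your Step~1 merely makes explicit the identity that the paper states inline (and records separately in Lemma~\ref{lem:nuN-via-CN}).
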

\begin{proof}
With the help of Lemma~\ref{lem:nuN} choose $N$ large enough that $\nu(A)\stackrel{\eps/2}\preceq \nu_N(A)=\inf_j\nu_{j..j+N-1}(A)$
and then $j$ such that the infimum is $\stackrel{\eps/2}\preceq \nu_{j..j+N-1}(A)$. 
Then put $l:=j$ and $r:=j+N-1$.
\end{proof}

\subsection*{Submatrices of consecutive columns}
Let $A$ be a band matrix with band-width $w$.

For $N\in\NN$, we say that $C$ is an \emph{$N$-column submatrix} of $A$ if $C$ consists of $N$ consecutive columns of $A$. 
For simplicity and comparability, restrict $C$ to size $(N+2w)\times N$, capturing exactly the ``banded part'' of those $N$ columns of $A$, 
that is,
$C=(C_{ij})_{i\in 1-w..N+w,\ j\in 1..N}$ with 
\[
\exists k\in\ZZ:\quad C_{i,j}=A_{k+i,k+j},\quad i\in 1-w..N+w,\ j\in 1..N,
\]
where $C_{i,j}:=0$ if $A_{k+i,k+j}$ is not defined.
Let
\begin{itemize}
\item $\C_N(A)$ denote the set of all $N$-column submatrices of $A$ and 
\item $\C(A):=\cup_{N\in\NN}\ \C_N(A)$.
\end{itemize}

\subsection*{Self-contained operators}
We call a band operator $A$ \emph{self-contained} if every $C\in\C(A)$ appears infinitely often in $A$.
For $A=H(b)$ with some word $b$ this means that every $w\in\W(b)$ appears infinitely often in $b$.
For example, operators with aperiodic or pseudoergodic matrix diagonals (see Section \ref{sec:classes}) are self-contained.
\begin{lemma}\label{lem:self1}
For a band operator $A$, the statements
\[
\begin{array}{rlp{5mm}}
(i)&A\text{ is self-contained,}&\\
(ii)&A \text{ is self-similar (in the sense of \cite{CWLi:Favard,Li:MinLimOps}),}&\\
(iii)&A \text{ is invertible if and only if it is Fredholm,}
\end{array}
\]
are related as follows: $\quad (i) \Rightarrow (ii) \Rightarrow (iii)$.\\
If the set of all entries of $A$ (as a matrix) is finite then $(i)\Leftrightarrow(ii)$.
\end{lemma}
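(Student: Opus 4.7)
The plan is to phrase everything in terms of limit operators. Recall (from~\cite{CWLi:Favard,Li:MinLimOps}) that a band operator $B$ is a \emph{limit operator} of $A$ if there is a sequence $k_n \in \ZZ$ with $|k_n|\to\infty$ such that $S^{-k_n} A S^{k_n}$ converges $*$-strongly to $B$; equivalently, the matrix of $S^{-k_n} A S^{k_n}$ converges entrywise to that of $B$. Self-similarity of $A$ then means $A$ is a limit operator of itself.

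For $(i)\Rightarrow(ii)$, suppose $A$ is self-contained. For each $N\in\NN$, the $N$-column submatrix $C_N\in\C_N(A)$ capturing columns $1-N..N$ of $A$ appears infinitely often in $A$; in particular there is a shift $k_N$ with $|k_N|>N$ such that $A_{i+k_N,\,j+k_N}=A_{i,j}$ for all $(i,j)$ in the $(2N+2w)\times 2N$ window of those columns. Hence $(S^{-k_N}AS^{k_N})_{i,j}=A_{i,j}$ throughout the square $-N..N\times 1-N..N$, and $k_N\to\infty$. A standard diagonal extraction produces a subsequence along which the matrix entries converge entrywise (in fact, are eventually constantly equal) to $A$; combined with uniform boundedness of the bandwidth and of $\|S^{-k_N}AS^{k_N}\|=\|A\|$, this upgrades to $*$-strong convergence. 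Thus $A$ is a limit operator of itself, i.e., self-similar.

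For $(ii)\Rightarrow(iii)$, the nontrivial direction is ``Fredholm $\Rightarrow$ invertible''. A band operator is Fredholm iff all of its limit operators are invertible (this is the operator-theoretic characterization underlying \cite{CWLi:Favard,Li:MinLimOps,RaRoSi:Book}). If $A$ is self-similar then $A$ is one of its own limit operators, so Fredholmness of $A$ forces $A$ to be invertible. The converse is trivial.

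For the equivalence $(i)\Leftrightarrow(ii)$ under finiteness of the entry set, only $(ii)\Rightarrow(i)$ remains. If $A$ is self-similar with $S^{-k_n}AS^{k_n}\to A$ strongly, then matrix entries converge; but with only finitely many possible values, entrywise convergence means eventual equality on every finite window. Hence for each $N$, the $N$-column submatrix of $A$ at columns $1..N$ is reproduced at columns $1+k_n..N+k_n$ for all large $n$, and because $|k_n|\to\infty$ this gives infinitely many occurrences. Since every $C\in\C_N(A)$ is the $N$-column submatrix of $A$ at some starting column $j$, the same argument applied to the shifted sequence $k_n+j$ yields infinitely many occurrences of $C$, proving self-containedness.

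The main obstacle I expect is the diagonal extraction in $(i)\Rightarrow(ii)$: one must verify that self-containedness delivers \emph{exact} (not merely approximate) agreement on finite windows, so that no compactness of the entry set is needed in this direction. The finiteness hypothesis only enters to run the reverse implication via pigeonhole on entry values.
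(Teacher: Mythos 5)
Your proposal is correct: the paper itself proves this lemma only by citing Lemmas 4.3 and 2.2 of \cite{Li:MinLimOps}, and your argument reconstructs exactly the limit-operator route those lemmas rely on --- self-containedness gives exact agreement of $S^{-k_N}AS^{k_N}$ with $A$ on growing column windows (hence $*$-strong convergence, so $A$ is its own limit operator), the Fredholm criterion ``all limit operators invertible'' yields $(ii)\Rightarrow(iii)$, and the pigeonhole/eventual-equality argument over a finite entry set gives the converse $(ii)\Rightarrow(i)$. The two points you flag as obstacles are indeed the only places requiring care, and you handle both correctly.
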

\begin{proof}
This is immediate from Lemmas 4.3 and 2.2 in \cite{Li:MinLimOps}.
\end{proof}
For self-contained operators, many of our formulas and arguments simplify:
\begin{lemma} \label{lem:self2}
If $A$ is self-contained, then $\nu(A)=\nu(A^*)$, 
so that we can discard $\nu(A^*)$ and $\nu_N(A^*)$ from \eqref{eq:invnu} and \eqref{eq:nu_n2}, i.e.
\begin{equation*}
\nu_N(A) \searrow \nu(A) =\frac 1{\|A^{-1}\|} \quad \text{as } N\to \infty.
\end{equation*}
\end{lemma}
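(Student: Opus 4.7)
The plan is to route the equality $\nu(A)=\nu(A^*)$ through the invertibility of $A$, by showing that, for a self-contained band operator, being bounded below already forces invertibility. Once that is established, the two cases (invertible vs.~non-invertible) each make $\nu(A)=\nu(A^*)$ immediate, and the claim about $\nu_N$ follows from the already-known monotone convergence~\eqref{eq:localnu}.

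First, I verify that $A^*$ is also self-contained. The $k$-th diagonal of $A^*$ is the complex conjugate of the $(-k)$-th diagonal of $A$, so every finite column pattern of $A^*$ is a reflected and conjugated finite column pattern of $A$. The property ``every finite column pattern appears infinitely often'' is clearly preserved, so $A^*$ is self-contained. Hence Lemma~\ref{lem:self1}(iii) applies to both $A$ and $A^*$: each is invertible iff Fredholm. Since Fredholmness is self-dual, $A$ is invertible iff $A^*$ is.

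The crux is the equivalence $\nu(A)>0\Leftrightarrow A\text{ invertible}$ (and similarly for $A^*$). The direction $\Leftarrow$ is immediate from~\eqref{eq:invnu}. For $\Rightarrow$, suppose $\nu(A)>0$; then $A$ is injective with closed range, hence semi-Fredholm with trivial kernel. I would invoke the limit-operator Fredholm criterion for band-dominated operators (cf.~\cite{Li:MinLimOps,RaRoSi:Book}): boundedness from below is inherited by every limit operator of $A$, and self-containment ensures that the operator spectrum of $A$ is rich enough -- in particular symmetric between $A$ and $A^*$ via the self-similar structure -- to upgrade semi-Fredholmness to Fredholmness. By Lemma~\ref{lem:self1}(iii), $A$ is then invertible, and therefore $\nu(A^*)>0$ as well.

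With this equivalence in hand, the conclusion splits into two cases. If $A$ is invertible, then $\nu(A)=1/\|A^{-1}\|=1/\|(A^*)^{-1}\|=\nu(A^*)$. Otherwise $A$ is not invertible, so by the equivalence both $\nu(A)$ and $\nu(A^*)$ vanish. In either case $\nu(A)=\nu(A^*)$, and combining with~\eqref{eq:localnu} and~\eqref{eq:invnu} yields $\nu_N(A)\searrow \nu(A)=1/\|A^{-1}\|$ as $N\to\infty$.

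The main obstacle is the forward direction of the key equivalence, namely that a self-contained band operator bounded below is automatically invertible. This is where the self-similar structure really does the work -- the symmetry $B\in\sigma_{op}(A)\Leftrightarrow B^*\in\sigma_{op}(A^*)$ is needed to rule out a scenario where $\nu(A)>0$ but $A$ still has infinite-dimensional cokernel. Executing this cleanly requires a pointed citation to the Fredholm-via-limit-operators machinery of~\cite{Li:MinLimOps}, rather than a direct computation with the $(N+2w)\times N$ submatrices of $A$ and $A^*$, whose shapes differ enough to prevent a purely local argument.
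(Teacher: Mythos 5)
Your proof follows essentially the same route as the paper's: the case split on $\nu(A)>0$ versus $\nu(A)=0$, the chain ``bounded below $\Rightarrow$ injective with closed range $\Rightarrow$ semi-Fredholm $\Rightarrow$ Fredholm $\Rightarrow$ invertible (by Lemma~\ref{lem:self1}(iii)) $\Rightarrow$ $\nu(A^*)=\nu(A)>0$'', and the reduction of the non-invertible case to the invertible one by running the same argument for $A^*$. The one place where your justification goes astray is the step you yourself single out as the crux, namely $\Phi_+\Rightarrow$ Fredholm: you attribute it to self-containment making the operator spectrum ``rich enough'' and ``symmetric between $A$ and $A^*$''. That symmetry ($B\in\sigma_{\mathrm{op}}(A)\Leftrightarrow B^*\in\sigma_{\mathrm{op}}(A^*)$) holds for \emph{every} band-dominated operator, so it cannot be the decisive ingredient, and in fact no special structure is needed for this step at all: the correct citation is Theorem~4.3 of \cite{Sei:semi}, which says that in dimension one every semi-Fredholm (rich) band-dominated operator is already Fredholm, self-contained or not. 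Self-containment enters only at the \emph{next} link, Fredholm $\Rightarrow$ invertible, exactly where you correctly invoke Lemma~\ref{lem:self1}(iii). With that citation repaired your argument coincides with the paper's proof; your preliminary observation that $A^*$ is again self-contained is a useful explicit supplement (the paper leaves this implicit when it argues ``as in case 1'' for $A^*$), and the final passage to $\nu_N(A)\searrow\nu(A)=1/\|A^{-1}\|$ via \eqref{eq:localnu} and \eqref{eq:invnu} is the same.
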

\begin{proof}
We distinguish two cases for $\nu(A)$.
\begin{itemize} 
\item Case 1: $\nu(A)>0$
\begin{itemize}
\item then $\dim \ker(A)=0$ and $\im(A)$ is closed, e.g.~Lemma 2.32 in \cite{LiBook},
\item so $A$ is semi-Fredholm (in terms of~\cite{Sei:semi}, $\Phi_+$),
\item by Theorem 4.3 in \cite{Sei:semi}, $A$ is Fredholm, 
\item by Lemma \ref{lem:self1} $(iii)$, $A$ is invertible,
\item but then $\nu(A^*)=\nu(A)>0$, e.g.~Lemma 2.10 \cite{HagLiSei}. 
\end{itemize}
\item Case 2: $\nu(A)=0$\\
If $\nu(A^*)$ were nonzero then, arguing as in case 1, also $\nu(A)>0$; contradiction.
So $\nu(A^*)=0$. \qedhere
\end{itemize}
\end{proof}

\section{Subwords, spectra and approximation} \label{sec:results}
\subsection{Column-submatrices, spectra and pseudospectra}
Because the submatrices $C\in\C_N(A)$ are what vectors $x$ with $\diam(\supp(x))<N$ get to ``see'' of $A$, the set $\C_N(A)$ has obvious connections to $\nu_N(A)$ and $\nu(A)$ but then also to resolvent norm and spectrum of $A$:
\begin{lemma} \label{lem:nuN-via-CN}
Let $A$ be a band operator on $\ell^2(\II)$ with a discrete interval $\II\subseteq\ZZ$. Then, for every $N\in\NN$,
\[
\nu_N(A)\ =\ \inf  \{\nu_{j..j+N-1}(A): j..j+N-1\subseteq \II\} =\ \inf\{\nu(C):C\in\C_N(A)\}.
\]
\end{lemma}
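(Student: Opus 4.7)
I would prove both equalities separately. The first equality rearranges the infimum over unit vectors in a single supremum into a two-step infimum: first over the position of the "window" in which the support lies, then over unit vectors supported in that window. The second equality translates the restriction on supports into a statement about the banded submatrices seen by such vectors.

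For the first equality, I would use the elementary observation that for a discretely supported $x$, the condition $\diam(\supp(x)) < N$ is equivalent to $\supp(x) \subseteq j..j+N-1$ for some $j \in \ZZ$. Combined with $\supp(x) \subseteq \II$, for any interesting $\II$ (infinite, or at least containing an interval of length $N$), one can choose $j$ so that $j..j+N-1 \subseteq \II$ still contains $\supp(x)$. Since
\[
\nu_N(A) \;=\; \inf\bigl\{\|Ax\| : x \in \ell^2(\II),\ \|x\|=1,\ \diam(\supp(x)) < N\bigr\},
\]
partitioning the admissible $x$ by the window $j..j+N-1$ containing their support gives, by definition \eqref{eq:nu-lr},
\[
\nu_N(A) \;=\; \inf_{j..j+N-1 \subseteq \II} \nu_{j..j+N-1}(A).
\]

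For the second equality, fix $j$ with $j..j+N-1 \subseteq \II$ and let $C \in \C_N(A)$ be the $(N+2w)\times N$ submatrix associated with this window, i.e., $C_{i,\ell} = A_{j-1+i,\,j-1+\ell}$ for $i \in 1-w..N+w$, $\ell \in 1..N$ (with $C_{i,\ell}=0$ whenever the corresponding $A$-entry is undefined because its row lies outside $\II$). The key point is the bandwidth structure of $A$: if $x \in \ell^2(\II)$ has $\supp(x) \subseteq j..j+N-1$, then $(Ax)_i$ can be nonzero only for $i \in (j-w)..(j+N-1+w)$, and for such $i$, $(Ax)_i$ depends only on entries $A_{i,\ell}$ with $\ell \in j..j+N-1$. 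After the obvious translation $x \leftrightarrow \tilde x$ with $\tilde x_\ell := x_{j-1+\ell}$, we obtain $\|Ax\|_{\ell^2(\II)} = \|C\tilde x\|_{\ell^2(1-w..N+w)}$ and $\|x\| = \|\tilde x\|$. Taking the infimum over unit vectors yields $\nu_{j..j+N-1}(A) = \nu(C)$, and ranging over all $j$ (equivalently, over all $C \in \C_N(A)$) gives the stated formula.

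\textbf{Main obstacle.} The only delicate bookkeeping is at boundaries of $\II$: when $\II$ is a half-axis like $\NN$, the rows of $C$ with index $\le 0$ correspond to "undefined" entries of $A$. The convention $C_{i,\ell}=0$ in that case is exactly what is needed so that $\|Ax\|_{\ell^2(\II)}$ agrees with $\|C\tilde x\|$, because those rows make no contribution to $Ax$ on the left-hand side either. Verifying this matching row by row (essentially a notation check) is the step I would carry out most carefully; everything else is a direct unravelling of the definitions.
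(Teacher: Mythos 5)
Your proposal is correct and follows the same route as the paper, which simply observes that the first equality is a rewriting of definition \eqref{eq:nu_n} in terms of \eqref{eq:nu-lr} and the second follows from the definition of $N$-column submatrices; you have merely spelled out the window decomposition and the boundary-row bookkeeping that the paper leaves implicit. No gap.
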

\begin{proof}
Rewriting the definition \eqref{eq:nu_n} in terms of \eqref{eq:nu-lr}, gives the first equality.
The second equality is by the definition of $N$-column submatrices and $\C_N(A)$.
\end{proof}

\begin{proposition} \label{prop:NCC}
Let $\II_A,\JJ_A,\II_B,\JJ_B\subseteq\ZZ$ be finite or infinite discrete intervals and let
$A:\ell^2(\JJ_A)\to\ell^2(\II_A)$ and $B:\ell^2(\JJ_B)\to\ell^2(\II_B)$ be band operators, associated with matrices $(A_{ij})_{i\in\II_A,j\in\JJ_A}$ and $(B_{ij})_{i\in\II_B,j\in\JJ_B}$ with the same band-width $w$.
\begin{enumerate}[label=(\alph*)]
\item If, for some $N\in\NN$, $\C_N(A)\subseteq \C_N(B)$ then $\nu_N(A)\ge\nu_N(B)$.
\item If $\C(A)\subseteq \C(B)$ then $\nu(A)\ge\nu(B)$.
\item If $N\in\NN$ and $\C_{N+2w}(A)\subseteq \C_{N+2w}(B)$ then also $\C_N(A^*)\subseteq \C_N(B^*)$.
\item If $\C(A)\subseteq \C(B)$ then also $\C(A^*)\subseteq \C(B^*)$.
\end{enumerate}
Now suppose $\II_A=\JJ_A$ and $\II_B=\JJ_B$, so that $A$ and $B$ are endomorphisms of $\ell^2(\II_A)$ and $\ell^2(\II_B)$, respectively.
\begin{enumerate}[label=(\alph*)]
\setcounter{enumi}{4}
\item If $\C(A)\subseteq \C(B)$ then
\begin{equation} \label{eq:nu-lambda}
\nu(A-\lambda)\ \ge\ \nu(B-\lambda),\quad \forall \lambda\in\CC
\end{equation}
and
\begin{equation} \label{eq:resAB}
\|(A-\lambda)^{-1}\|\ \le\ \|(B-\lambda)^{-1}\|,\quad \forall \lambda\in\CC,
\end{equation}
whence
\[
\sigma(A)\subseteq \sigma(B)
\qquad\text{and}\qquad
\sigma_\eps(A)\subseteq \sigma_\eps(B),\quad \eps>0.
\]
\end{enumerate}
\end{proposition}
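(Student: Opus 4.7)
The plan is to work through parts (a)--(e) in sequence, each relying on the previous ones.

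Parts (a) and (b) are almost immediate. Lemma~\ref{lem:nuN-via-CN} (whose proof extends verbatim from the endomorphism case to the rectangular setting) gives $\nu_N(A) = \inf\{\nu(C) : C \in \C_N(A)\}$, and taking the infimum over a smaller set can only make it larger; this yields~(a). For (b), the hypothesis $\C(A) \subseteq \C(B)$ forces $\C_N(A) \subseteq \C_N(B)$ for every $N$, so (a) combined with the monotone convergence~\eqref{eq:localnu} produces $\nu(A) = \lim_N \nu_N(A) \ge \lim_N \nu_N(B) = \nu(B)$.

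For (c) and (d) I would carry out the index bookkeeping explicitly. An $N$-column submatrix $\tilde C \in \C_N(A^*)$ with parameter $k$ has entries $\tilde C_{ij} = \overline{A_{k+j,\,k+i}}$ for $i \in 1-w..N+w$ and $j \in 1..N$. Substituting $(p,q) := (w+j,\,w+i)$ rewrites these as $\tilde C_{ij} = \overline{D_{p,q}}$, where $D$ is the $(N+2w)$-column submatrix of $A$ with parameter $k-w$, while $p$ ranges over $w+1..w+N$ and $q$ over $1..N+2w$. Hence $\tilde C$ is the conjugate transpose of a fixed sub-block of $D$ (rows $w+1..w+N$, all columns), and the extraction rule $D \mapsto \tilde C$ depends only on $N$ and $w$, not on the entries of $A$. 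Consequently, if $\C_{N+2w}(A) \subseteq \C_{N+2w}(B)$, every $\tilde C \in \C_N(A^*)$ arises from some $D \in \C_{N+2w}(A) \subseteq \C_{N+2w}(B)$, which forces $\tilde C \in \C_N(B^*)$; this is~(c), and (d) follows by taking the union over $N$.

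For (e), shifting $A \mapsto A - \lambda$ acts on column submatrices by the fixed affine map $C \mapsto C - \lambda E$, where $E$ is the $(N+2w) \times N$ matrix with $1$'s at positions $(i,i)$, $i \in 1..N$, and zeros elsewhere. This map is a bijection between $\C_N(A)$ and $\C_N(A - \lambda)$, so $\C(A) \subseteq \C(B)$ implies $\C(A - \lambda) \subseteq \C(B - \lambda)$. Part (b) then delivers~\eqref{eq:nu-lambda}. Applying (d) to $A - \lambda$ and $B - \lambda$ gives $\C((A-\lambda)^*) \subseteq \C((B-\lambda)^*)$, and a second application of (b) yields $\nu((A-\lambda)^*) \ge \nu((B-\lambda)^*)$. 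Inserting both bounds into~\eqref{eq:invnu} produces~\eqref{eq:resAB}, from which $\sigma(A) \subseteq \sigma(B)$ and $\sigma_\eps(A) \subseteq \sigma_\eps(B)$ follow at once from the definitions. The main obstacle I expect is the index bookkeeping in~(c): correctly locating the conjugate transpose of an $N$-column submatrix of $A^*$ inside an $(N+2w)$-column submatrix of $A$, and verifying that entries close to the endpoints of $\II_A$ and $\JJ_A$ (where the definition of $\C_N$ treats undefined matrix entries as zero) are handled consistently on both sides of the claimed inclusion.
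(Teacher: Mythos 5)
Your proposal is correct and follows essentially the same route as the paper: (a) via Lemma~\ref{lem:nuN-via-CN}, (b) via (a) and \eqref{eq:localnu}, (c) by locating the conjugate transpose of an $N$-column submatrix of $A^*$ inside an $(N+2w)$-column submatrix of $A$, and (e) by applying (b) and (d) to $A-\lambda$ and invoking \eqref{eq:invnu}. Your explicit index bookkeeping in (c), and your remark that Lemma~\ref{lem:nuN-via-CN} extends to the rectangular setting, merely spell out details the paper leaves implicit.
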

\begin{proof}
\begin{enumerate}[label=(\alph*)]
\item 
This is immediate from Lemma \ref{lem:nuN-via-CN}.
\item If $\C(A)\subseteq\C(B)$, i.e.~$\C_N(A)\subseteq\C_N(B)$ holds for all $N\in\NN$, then,
by (a) and \eqref{eq:localnu}, it follows that $\nu(A) \ge \nu(B)$. 
\item 

Let $\C_{N+2w}(A)\subseteq\C_{N+2w}(B)$ and take $C\in\C_N(A^*)$.
Then $C$ has $N$ columns and $N+2w$ rows.
So $C^*$ is contained in $N+2w$ consecutive columns of $A$ and, hence,
in a matrix $D\in\C_{N+2w}(A)\subseteq\C_{N+2w}(B)$.
Using the same arguments backwards, $C\in\C_N(B^*)$.
\item If $\C(A)\subseteq\C(B)$ then $\C_{N+2w}(A)\subseteq\C_{N+2w}(B)$, and, by (c), 
$\C_N(A^*)\subseteq\C_N(B^*)$ for all $N\in\NN$, so that $\C(A^*)\subseteq\C(B^*)$.
\item From $\C(A)\subseteq\C(B)$ it follows that $\C(A-\lambda)\subseteq\C(B-\lambda)$ for all $\lambda\in\CC$, so that \eqref{eq:nu-lambda} follows in analogy.
By (d), we also have $\C((A-\lambda)^*)\subseteq\C((B-\lambda)^*)$ for all $\lambda\in\CC$,
so that \eqref{eq:nu-lambda} also holds for the adjoints, i.e.,
\[
\nu(A-\lambda)\ \ge\ \nu(B-\lambda)
\quad\text{and}\quad
\nu((A-\lambda)^*)\ \ge\ \nu((B-\lambda)^*)
\]
for all $\lambda\in\CC$. Now \eqref{eq:resAB} follows from \eqref{eq:invnu}.
The inclusion of spectra and pseudospectra is now immediate, by their definition.
\qedhere
\end{enumerate}
\end{proof}

\subsection{Two Schrödinger operators on the axis: subword variety and spectrum}
In the setting of a generalized Schrödinger operator, $A=H(b)$ with $b\in\S^\ZZ$,
the matrix is constant on all but one diagonal, so that $\C_N(H(b))$ corresponds directly to the set $\W_N(b)$ of all length-$N$ subwords of $b$. 
We get a first simple result on how the subword variety of $b$ has implications on resolvent and spectrum of $H(b)$.
\begin{theorem} \label{thm:subwordspec}
If $b,c\in\S^\ZZ$ with $\W(b)\subseteq\W(c)$ then $\nu(H(b))\ge\nu(H(c))$ and
\[
\|(H(b)-\lambda)^{-1}\|\le\|(H(c)-\lambda)^{-1}\|,
\quad \lambda\in\CC,
\]
so that
\[
\sigma(H(b))\subseteq\sigma(H(c))
\qquad\text{and}\qquad
\sigma_\eps(H(b))\subseteq\sigma_\eps(H(c)),\quad\eps>0.
\]
\end{theorem}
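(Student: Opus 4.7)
The plan is to reduce the statement to a direct application of Proposition~\ref{prop:NCC}(e) by translating the subword hypothesis $\W(b)\subseteq\W(c)$ into the column-submatrix containment $\C(H(b))\subseteq \C(H(c))$.

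First I would unpack the structure of $H(b)=L+S^\gamma M_b$ (Definition~\ref{def:gSchrö}): since $L$ is translation invariant and only finitely many of its diagonals are nonzero, every diagonal of the matrix of $H(b)$ is constant except the $\gamma$-th one, on which the varying potential $b$ sits. Consequently, for any $N\in\NN$ and any $N$-column submatrix $C$ of $H(b)$, the off-diagonal entries of $C$ are completely determined by $L$ (in particular independent of the position of the columns), while the entries of $C$ lying on the $\gamma$-th diagonal form a consecutive block of $N$ entries of $b$. This gives a bijection between $\C_N(H(b))$ and $\W_N(b)$, and analogously for $c$.

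Next I would argue the containment of column-submatrix sets: the hypothesis $\W(b)\subseteq \W(c)$ says $\W_N(b)\subseteq \W_N(c)$ for every $N$, so the bijections from the previous step yield $\C_N(H(b))\subseteq \C_N(H(c))$ for every $N\in\NN$, whence $\C(H(b))\subseteq \C(H(c))$. Note that $H(b)$ and $H(c)$ are both endomorphisms of $\ell^2(\ZZ)$ with the same band-width $w$, so the applicability of Proposition~\ref{prop:NCC} is unproblematic.

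Finally, a single invocation of Proposition~\ref{prop:NCC}(e) yields $\nu(H(b)-\lambda)\ge \nu(H(c)-\lambda)$ and $\|(H(b)-\lambda)^{-1}\|\le \|(H(c)-\lambda)^{-1}\|$ for every $\lambda\in\CC$ (and in particular $\nu(H(b))\ge \nu(H(c))$ by taking $\lambda=0$), together with the corresponding inclusions $\sigma(H(b))\subseteq \sigma(H(c))$ and $\sigma_\eps(H(b))\subseteq \sigma_\eps(H(c))$ for every $\eps>0$. There is no real obstacle: the only thing that needs attention is the bookkeeping of the ``constant-diagonal vs.\ varying-diagonal'' splitting so that the correspondence $\C_N(H(b))\leftrightarrow \W_N(b)$ is genuinely a bijection — once this is clean, the result is a one-line consequence of Proposition~\ref{prop:NCC}(e).
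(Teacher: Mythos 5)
Your proposal is correct and follows essentially the same route as the paper: translate $\W(b)\subseteq\W(c)$ into $\C(H(b))\subseteq\C(H(c))$ via the one-varying-diagonal structure of $H(b)=L+S^\gamma M_b$ and then invoke Proposition~\ref{prop:NCC}. The paper additionally spells out the adjoint containment $\C(H(b)^*)\subseteq\C(H(c)^*)$ together with \eqref{eq:invnu}, but that is already packaged inside part (e) of the proposition, so your single invocation of (e) is equivalent.
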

\begin{proof}
The result follows directly from Proposition \ref{prop:NCC} and \eqref{eq:invnu} since
$\W(b)\subseteq\W(c)$ implies $\C(H(b))\subseteq\C(H(c))$ and $\C(H(b)^*)\subseteq\C(H(c)^*)$.
\end{proof}

\subsection{Band operator: axis versus half-axis} 
Let $A$ be a band operator on the axis and let $w$ denote its band-width.
We want to compare the inverses, resolvent norms and spectra of $A$ and 
its half-axis compression $A^+$.

As an intermediate operator between $A$ and $A^+$, look at its restriction
\[
A|_\NN\ :=\ A|_{\ell^2(\NN)}\ :\ \ell^2(\NN)\to\ell^2(\ZZ)
\]
to $\ell^2(\NN)$.
The matrix of $A|_\NN$ is $\ZZ\times\NN$ and consists of the columns of $A$ with index in $\NN$;
the matrix of $A^+$ is $\NN\times\NN$ and consists of the rows of $A|_\NN$ with index in $\NN$:
\[
A\ =\ 
\left(
\begin{array}{ccc|ccccc}
\smash\ddots&\smash\ddots&\smash\ddots&\\
\smash\ddots&+&*&*\\
\smash\ddots&*&+&*&*\\ \hline
&*&*&+&*&*\\
&&*&*&+&*&*\\
&&&*&*&+&*&\smash\ddots\\
&&&&*&*&+&\smash\ddots\\
&&&&&\smash\ddots&\smash\ddots&\smash\ddots
\end{array}
\right)
\ \ 
\begin{tabular}{l}
The band-width here is $w=2$,\\
``$+$'' marks the main diagonal,\\[4mm]
the right half is $A|_\NN$,\\
the lower right quarter is $A^+$.
\end{tabular}
\]
Another way to look at these restrictions is that
\begin{equation} \label{eq:PAP}
A|_\NN=AP:\im(P)\to\ell^2(\ZZ)
\quad\text{and}\quad
A^+=PAP:\im(P)\to\im(P),
\end{equation}
where $P$ is the orthogonal projection from $\ell^2(\ZZ)$ onto $\im(P)=\ell^2(\NN)$.

Let us start with the assumption that 
\begin{equation} \label{eq:CA=CA'}
\C(A)\ =\ \C(A|_\NN),
\end{equation}
so that no other patterns appear on the columns with index $j\in ..0$ of $A$ and the only aspect here is what effect the truncation (or zero Dirichlet condition), from $A$ to $A^+$, has on the resolvent and the spectrum. 
Before we come to this, let us study some further implications of \eqref{eq:CA=CA'}.
\begin{lemma}\label{lem:auto_selfcontained}
Let $A$ be a band operator. 
If $A$ satisfies \eqref{eq:CA=CA'} then
\begin{enumerate}[label=(\alph*)]
 \item  $A$ is self-contained,
 \item  also $A^*$ is subject to \eqref{eq:CA=CA'} in place of $A$, and 
 \item one also has $\C(A)=\C(A|_{k..})$ for all $k\in\ZZ$. 
 \end{enumerate}
Self-containedness of a band operator $B$ can even be characterized via the restrictions  $B|_\NN$ and $B|_{-\NN}$ in the following sense:
\begin{enumerate}[label=(\alph*)]
\addtocounter{enumi}{3}
 \item  $B$ is self-contained if and only if $\C(B) = \C(B|_\NN) \cup \C(B|_{-\NN})$.
\end{enumerate}
\end{lemma}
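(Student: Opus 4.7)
The plan is to chain the four items together: prove (a) by a ``pattern inflation'' argument, deduce (b) and (c) quickly from the self-containedness provided by (a), and then prove (d) in two directions using essentially the same inflation idea as for (a).

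\textbf{For (a).} To show that every $C\in\C_N(A)$ appears infinitely often, I would enlarge $C$ on both sides: for each $m\in\NN$, pick an $(N+2m)$-column submatrix $\tilde C_m\in\C_{N+2m}(A)$ that contains $C$ in its middle (so that if $\tilde C_m$ occupies columns $k-m+1,\dots,k+N+m$, then $C$ is at columns $k+1,\dots,k+N$). Applying the hypothesis $\C(A)=\C(A|_\NN)$ to $\tilde C_m$ produces an occurrence of $\tilde C_m$ at some column position $j_m\ge 0$. Extracting the $N$ middle columns gives an occurrence of $C$ at position $j_m+m\ge m$. Letting $m\to\infty$ yields infinitely many distinct occurrences of $C$.

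\textbf{For (b).} Here the issue is that the columns of $A^*$ are (conjugates of) rows of $A$, so I must translate a row pattern into a column pattern. Given $C\in\C_N(A^*)$, it corresponds to $N$ consecutive rows of $A$, say rows $i+1,\dots,i+N$, whose banded entries sit inside the columns $i+1-w,\dots,i+N+w$. Those $N+2w$ consecutive columns form a block in $\C_{N+2w}(A)$; by (a), $A$ is self-contained, so this block reappears under arbitrarily large translations $t$. Each such translation shifts the embedded row pattern to rows $i+1+t,\dots,i+N+t$, which for $t$ large enough lies entirely in $\NN$. Hence $C\in\C_N(A^*|_\NN)$, giving $\C(A^*)\subseteq\C(A^*|_\NN)$; the reverse inclusion is trivial.

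\textbf{For (c).} This is immediate from (a): given $C\in\C(A)$, self-containedness of $A$ produces occurrences at arbitrarily large column positions, in particular at some position $\ge k$, which is exactly membership in $\C(A|_{k..})$.

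\textbf{For (d).} The ``$\Rightarrow$'' direction is straightforward: if $B$ is self-contained, any $C\in\C_N(B)$ has infinitely many column positions, so only finitely many of them can touch the column index $0$; hence some occurrence lies entirely in $\NN$ or entirely in $-\NN$. For ``$\Leftarrow$'', I would reuse the inflation idea from (a): for $C\in\C_N(B)$, inflate to $\tilde C_m\in\C_{N+2m}(B)$ centered at $C$, apply the hypothesis to get an occurrence of $\tilde C_m$ either at column position $\ge 1$ or at column position $\le -(N+2m)$, and then extract $C$ at position $\ge m$ or $\le -m-N$. As $m\to\infty$, these positions diverge, so $C$ occurs infinitely often.

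The main obstacle, as noted, is the indexing gymnastics in (b): making sure the bandwidth padding and the identification of $N$ consecutive rows with part of an $(N+2w)$-column block are done consistently, so that a translation of a column block really does translate the embedded row block by the same shift and lands its row indices inside $\NN$.
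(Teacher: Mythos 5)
Your proof is correct and follows essentially the same route as the paper's: the block-inflation trick driven by \eqref{eq:CA=CA'} for (a) and for the converse half of (d), the row-to-column transposition under the adjoint for (b) (which the paper instead outsources to Proposition~\ref{prop:NCC}~(c)), and the short deductions for (c) and the forward half of (d). One small wording point: in (b) and (c) you appeal to bare self-containedness to obtain occurrences at arbitrarily large \emph{positive} column positions, but self-containedness by itself only yields infinitely many occurrences (which could all drift to $-\infty$); the fact you actually need -- and which your inflation argument in (a) already establishes, since it places a copy of each block at position at least $m+1$ for every $m$ -- is that \eqref{eq:CA=CA'} forces occurrences arbitrarily far to the right, so you should cite that stronger conclusion rather than self-containedness alone.
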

\begin{proof}
\begin{enumerate}[label=(\alph*)]
 \item   
Let $C\in\C(A|_\NN)$ and let $l..r$ be the corresponding column numbers. 
We show that $C$ can be found in infinitely many positions of $A$:

By \eqref{eq:CA=CA'}, the submatrix $D\in\C(A)$ at columns $-r..r$ of $A$ can be found in $A|_\NN$, and hence at columns $1..2r+1$ or later. 
In particular, the rightmost $r-l+1$ columns of $D$, forming $C$, are found at columns $l+r+1..2r+1$ or later, which is disjoint from the location, $l..r$, (it is further to the right) of the original $C\in\C(A|_\NN)$.
Now keep repeating the argument for the newly found copy of $C$ in $A|_\NN$ to find a further copy of $C$, even further to the right. Hence, $C$ appears infinitely many times in $A$, i.e.~$A$ is self-contained.
\item Follows from Proposition~\ref{prop:NCC}~(c).
\item Follows from (a).
\item Since self-containedness of $B$ implies that every $C\in \C(B)$ appears infinitely often, it is clear that $C$ also appears in $B|_\NN$ or in $B|_{-\NN}$.
Hence it remains to show that  $\C(B) = \C(B|_\NN) \cup \C(B|_{-\NN})$ implies that $B$ is self-contained. But this can be done via similar arguments as in (a).
\qedhere
\end{enumerate}
\end{proof}

A first quick judgement on the spectrum of $A^+$ vs. that of $A$: The step from $A$ to $A|_\NN$ does not change the lower norm, by the assumption~\eqref{eq:CA=CA'} and Proposition~\ref{prop:NCC}~(b). But the step from $A|_\NN$ to $A^+$, chopping off some nonzero rows and hence deleting the entries $y_{1-w},\dots,y_0$ of every $y:=A|_\NN x$, decreases the lower norm and hence increases the inverse, resolvent and spectrum. 

The proper analysis is straightforward: 
For $x\in\ell^2(\NN)$, in the notations of \eqref{eq:PAP},
\[
	\|A^+x\|=\|PAPx\|\le\|APx\|=\|A|_\NN x\|=\|A\hat x\|, 
\]
where $\hat x\in\ell^2(\ZZ)$ is $x$, extended by zeros. 
But $PAPx=APx$ if $\supp(x)\subseteq w+1..\,$.
So, for all $N\in\NN$,
\begin{align}
\nu_{j..j+N-1}(A^+)\ &\le\  \nu_{j..j+N-1}(A|_\NN)\ =\  \nu_{j..j+N-1}(A),\quad j\in 1..w, \label{eq:columns_cut}\\
\nu_{j..j+N-1}(A^+)\ &=\   \nu_{j..j+N-1}(A|_\NN)\ =\  \nu_{j..j+N-1}(A),\quad j\in w+1..\ . \label{eq:columns_intact}
\end{align}
Columns with index in $1..w$ (probably) lost some nonzero entries when passing from $A$ via $A|_\NN$ to $A^+$; columns in $w+1..$ did not.
With this preparation, we prove:
\begin{proposition} \label{prop:band+}
Let $A$ be a band operator with band-width $w$ on the axis.
Then, for all $T\in\{A-\lambda, (A-\lambda)^*:\lambda\in\CC\}$
and all $N\in\NN$,
\begin{equation} \label{eq:nu+N}
\nu_N(T^+)\ =\ \min\left\{\min_{j=1}^w \nu_{j..j+N-1}(T^+)\ ,\ \nu_N(T|_\NN)\right\},
\end{equation}
where $\nu_{l..r}(T^+)$ is the smallest singular value of the rectangular submatrix
\begin{equation} \label{eq:SV1}
(T^+_{ij})_{i\in 1..r+w,\, j\in l..r},\qquad l,r\in\NN,
\end{equation}
of $T^+$.
If, additionally, \eqref{eq:CA=CA'} holds for $A$, then $\nu_N(T|_\NN)=\nu_N(T)$, so that
\begin{equation} \label{eq:nu+}
\nu_N(T^+)\ =\ \min\left\{\min_{j=1}^w \nu_{j..j+N-1}(T^+)\ ,\ \nu_N(T)\right\}.
\end{equation}
In particular, by \eqref{eq:localnu} and \eqref{eq:invnu}, 
\[
\nu(T^+)\ \le\ \nu(T)
\qquad\text{and}\qquad
\|(T^+)^{-1}\|\ 
\ge\ \|T^{-1}\|,
\]
so that,
\[
\sigma(A)\subseteq \sigma(A^+)
\qquad\text{and}\qquad
\sigma_\eps(A)\subseteq \sigma_\eps(A^+),\quad \eps>0.
\]
\end{proposition}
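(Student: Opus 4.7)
My plan is to prove~\eqref{eq:nu+N} first by splitting the $\NN$-version of Lemma~\ref{lem:nuN-via-CN} at the band-width~$w$, then to upgrade to~\eqref{eq:nu+} under assumption~\eqref{eq:CA=CA'}, and finally to let $N\to\infty$ to extract the resolvent inequality and the spectral inclusions from the assembled lower-norm estimate.

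For~\eqref{eq:nu+N}, the $\NN$-analogue of Lemma~\ref{lem:nuN-via-CN} gives $\nu_N(T^+)=\inf_{j\ge 1}\nu_{j..j+N-1}(T^+)$, which I would split at $j=w$. The tail $j\ge w+1$ equals $\inf_{j\ge w+1}\nu_{j..j+N-1}(T|_\NN)$ by~\eqref{eq:columns_intact}, and is therefore $\ge \nu_N(T|_\NN)$. Conversely, $\nu_N(T|_\NN)$ also carries contributions from $j\in 1..w$, but by~\eqref{eq:columns_cut} each such contribution is bounded below by $\nu_{j..j+N-1}(T^+)$ and is thus already dominated by the $\min_{j=1}^w\nu_{j..j+N-1}(T^+)$ term on the right-hand side of~\eqref{eq:nu+N}. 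Combining the two bounds collapses the minimum exactly to~\eqref{eq:nu+N}. The definition of $\nu_{l..r}(T^+)$ as the smallest singular value of the rectangular submatrix~\eqref{eq:SV1} is just a reformulation of~\eqref{eq:nu-lr} restricted to $\ell^2(\NN)$, noting that adding further all-zero rows would not affect the singular value.

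To promote~\eqref{eq:nu+N} to~\eqref{eq:nu+} under assumption~\eqref{eq:CA=CA'}, I would verify $\C_N(T)=\C_N(T|_\NN)$ for both choices of~$T$, so that Lemma~\ref{lem:nuN-via-CN} gives $\nu_N(T)=\nu_N(T|_\NN)$. For $T=A-\lambda$ this is immediate since subtracting $\lambda$ from the main diagonal preserves the column-set identity; for $T=(A-\lambda)^*$ it follows from Lemma~\ref{lem:auto_selfcontained}(b), which guarantees that $A^*$ inherits~\eqref{eq:CA=CA'}. Finally, letting $N\to\infty$ in~\eqref{eq:nu+} via~\eqref{eq:localnu} yields $\nu(T^+)\le\nu(T)$; applying this to both $T=A-\lambda$ and its adjoint, using the identity $(T^+)^*=(T^*)^+$ together with~\eqref{eq:invnu}, delivers $\|(A^+-\lambda)^{-1}\|\ge\|(A-\lambda)^{-1}\|$, and the inclusions $\sigma(A)\subseteq\sigma(A^+)$ and $\sigma_\eps(A)\subseteq\sigma_\eps(A^+)$ are then immediate from~\eqref{eq:speps}. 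I expect the main obstacle to be the min-identity bookkeeping in the first step---carefully verifying that replacing the interior infimum by the full $\nu_N(T|_\NN)$ does not lower the resulting minimum, because the $j\in 1..w$ contributions inside $\nu_N(T|_\NN)$ already lie above the $\min_{j=1}^w \nu_{j..j+N-1}(T^+)$ term taken separately.
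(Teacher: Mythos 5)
Your proof is correct and follows essentially the same route as the paper: split $\nu_N(T^+)=\inf_{j\ge1}\nu_{j..j+N-1}(T^+)$ at $j=w$, use \eqref{eq:columns_intact} for the tail and \eqref{eq:columns_cut} to absorb the $j\in1..w$ contributions of $\nu_N(T|_\NN)$, then invoke \eqref{eq:CA=CA'} with Proposition~\ref{prop:NCC} and Lemma~\ref{lem:auto_selfcontained}(b), and let $N\to\infty$. The only (harmless) variation is at the very end: you treat the adjoint explicitly via $(T^+)^*=(T^*)^+$ and \eqref{eq:invnu}, whereas the paper shortcuts this by combining Lemma~\ref{lem:auto_selfcontained}(a) with Lemma~\ref{lem:self2} to discard the adjoint terms altogether.
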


\begin{proof}
We fix $N\in\NN$, apply \eqref{eq:columns_cut} and \eqref{eq:columns_intact} to $T=A-\lambda$, and conclude \eqref{eq:nu+N} via Lemma \ref{lem:nuN-via-CN}:
\begin{align*}
\nu_N(T^+)\ &=\ \inf_{j\in\NN}\nu_{j..j+N-1}(T^+)\\
&=\ \min\left\{\min_{j=1}^w \nu_{j..j+N-1}(T^+)\ ,\ \inf_{j\ge w+1}\nu_{j..j+N-1}(T^+)\right\}\\
&\stackrel{\eqref{eq:columns_intact}}{=}\ \min\left\{\min_{j=1}^w \nu_{j..j+N-1}(T^+)\ ,\ \inf_{j\ge w+1}\nu_{j..j+N-1}(T)\right\}\\
&\stackrel{\eqref{eq:columns_cut}}{=}\ \min\left\{\min_{j=1}^w\Big\{\nu_{j..j+N-1}(T^+),\ \nu_{j..j+N-1}(T)\Big\}\ ,\ \inf_{j\ge w+1}\nu_{j..j+N-1}(T)\right\}\\
&=\ \min\left\{\min_{j=1}^w \nu_{j..j+N-1}(T^+)\ ,\ \inf_{j\in\NN}\nu_{j..j+N-1}(T)\right\}\\
&=\ \min\left\{\min_{j=1}^w \nu_{j..j+N-1}(T^+)\ ,\ \nu_N(T|_\NN)\right\}.
\end{align*}
Since also \eqref{eq:CA=CA'} transfers from $A$ to all $A-\lambda$, we get $\nu_N(T)=\nu_N(T|_\NN)$ and hence \eqref{eq:nu+}, by Proposition \ref{prop:NCC}.
The rest is by Lemma~\ref{lem:auto_selfcontained} and Lemma~\ref{lem:self2}.
\end{proof}
\begin{remark} \label{rem:nuT+}
Letting $N\to\infty$ in \eqref{eq:nu+N} yields $\nu(T^+) = \min\{c,\nu(T|_\NN)\}$, where
\[
c\ :=\ \lim_{N\to\infty} \min_{j=1}^w \nu_{j..j+N-1}(T^+)\ =\ \min_{j=1}^w\nu_{j..}(T^+)\ =\ \nu_{1..}(T^+)\ =\ \nu(T^+)
\] 
since $\lim$ and $\min$ commute and since $\nu_{l..r}(T^+)\to\nu_{l..}(T^+)$ as $r\to\infty$ (by monotonicity and Corollary \ref{cor:nuN}).
This equality shows that the second term in the minimum of \eqref{eq:nu+N} and \eqref{eq:nu+} is asymptotically irrelevant.
\end{remark}

Now we apply Proposition \ref{prop:band+} to generalized Schrödinger operators $A=H(b)$:
\begin{corollary}
Let $b\in\S^\ZZ$. If $\W(b)=\W(b|_\NN)$ then, for all $\lambda\in\CC$,
\[
\|(H(b)^+-\lambda)^{-1}\|\ge\|(H(b)-\lambda)^{-1}\|,\quad\text{so that}\quad\sigma(H(b)^+)\supset\sigma(H(b)),
\]
where \eqref{eq:nu+} applies to $T=H(b)-\lambda$.
\end{corollary}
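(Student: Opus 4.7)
The plan is to reduce the statement to a direct application of Proposition~\ref{prop:band+} by verifying that the column-set hypothesis \eqref{eq:CA=CA'} is met for $A=H(b)$.

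First I would translate the subword hypothesis into the language of $N$-column submatrices. Recall that $H(b)=L+S^\gamma M_b$ with $L$ translation invariant, so all diagonals of the matrix of $H(b)$ are constant except for the $\gamma$-th one, which carries the potential $b$. Consequently, an $N$-column submatrix of $H(b)$ formed at column positions $k+1,\dots,k+N$ is completely determined by the length-$N$ subword $b(k+1)\dots b(k+N)\in\W_N(b)$ appearing on the non-constant diagonal. The same identification holds for the restriction $H(b)|_\NN$, with subwords now coming from $\W_N(b|_\NN)$. Therefore the equality $\W(b)=\W(b|_\NN)$ translates, level by level in $N$, to $\C_N(H(b))=\C_N(H(b)|_\NN)$, and hence to condition \eqref{eq:CA=CA'} for $A=H(b)$.

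With \eqref{eq:CA=CA'} verified, Proposition~\ref{prop:band+} applies directly. The spectral-shift hypothesis is preserved: for every $\lambda\in\CC$, the operator $H(b)-\lambda$ still has constant diagonals off the $\gamma$-th one (only the main diagonal shifts by a constant), so it also satisfies \eqref{eq:CA=CA'}; and by Lemma~\ref{lem:auto_selfcontained}(b), the same holds for the adjoint $(H(b)-\lambda)^*$. Applying \eqref{eq:nu+} to $T=H(b)-\lambda$ and to $T=(H(b)-\lambda)^*$, and combining via \eqref{eq:invnu}, yields
\[
\|(H(b)^+-\lambda)^{-1}\|\ \ge\ \|(H(b)-\lambda)^{-1}\|,\qquad \lambda\in\CC,
\]
and the spectral inclusion $\sigma(H(b))\subseteq\sigma(H(b)^+)$ follows immediately from the definition of the spectrum.

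There is no real obstacle here: the only thing that needs care is the bookkeeping between $\W_N$ and $\C_N$, specifically making sure that the shift $\gamma$ and the band-width $w$ of $L$ do not spoil the bijection between length-$N$ windows of $b$ and $N$-column patches of $H(b)$. Since $L$ is translation invariant and its matrix has only finitely many nonzero (constant) diagonals, this identification is clean, and the rest is a one-line citation of Proposition~\ref{prop:band+}.
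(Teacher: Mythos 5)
Your proposal is correct and follows exactly the paper's route: verify that $\W(b)=\W(b|_\NN)$ gives condition \eqref{eq:CA=CA'} for $A=H(b)$ (via the identification of $\C_N(H(b))$ with $\W_N(b)$ already noted before Theorem~\ref{thm:subwordspec}) and then invoke Proposition~\ref{prop:band+}, which already covers all $T=A-\lambda$ and their adjoints. The paper's own proof is just this one-line citation; your extra bookkeeping is accurate but not a different argument.
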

\begin{proof}
Note \eqref{eq:CA=CA'} for $A=H(b)$, by $\W(b)=\W(b|_\NN)$, and use Proposition~\ref{prop:band+}.
\end{proof}

\subsection{Two Schrödinger operators on the half axis}
\begin{proposition} \label{prop:nu1sided}
Let $b,c\in\S^\NN$ and let $H(b)^+$ and $H(c)^+$ denote the corresponding generalized Schrödinger operators on the half axis. Denote their band-width by $w$. If $N\in\NN$ and
\begin{equation} \label{eq:2nu_N+}
b|_{1..w+N-1}\ =\ c|_{1..w+N-1}
\qquad\text{and}\qquad
\W_N(b)\ \subseteq\ \W_N(c)
\end{equation}
then
\[
\nu_N(H(b)^+)\ \ge\ \nu_N(H(c)^+)
\]
\end{proposition}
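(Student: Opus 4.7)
The plan is to invoke Lemma~\ref{lem:nuN-via-CN} to rewrite
\[
\nu_N(H(b)^+)\ =\ \inf_{j\ge 1}\nu_{j..j+N-1}(H(b)^+),
\]
and, for each admissible starting column $j\ge 1$, produce a starting column $j^*\ge 1$ on the $c$-side such that $\nu_{j^*..j^*+N-1}(H(c)^+)\le \nu_{j..j+N-1}(H(b)^+)$. Taking $\inf_{j}$ then yields the claim. The structural observation driving the construction is that, since $H(b)=L+S^\gamma M_b$ with $L$ translation invariant of band-width at most $w$, the rectangular block from~\eqref{eq:SV1} that realizes $\nu_{j..j+N-1}(H(b)^+)$ depends on $b$ only through the subword $b|_{j..j+N-1}$; it is, moreover, obtained from the canonical $(N+2w)\times N$ banded axis block by deleting those top rows that would land at a non-positive index. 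In particular, for $j\ge w+1$ no deletion occurs at all.

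Accordingly, I would split into two cases. For $j\in 1..w$ I take $j^*:=j$. Since $j+N-1 \le w+N-1$, the first hypothesis forces $b|_{j..j+N-1}=c|_{j..j+N-1}$, and the row-truncation pattern depends only on $j$, so the two submatrices coincide literally and $\nu_{j..j+N-1}(H(b)^+) = \nu_{j..j+N-1}(H(c)^+)$. For $j\ge w+1$, the second hypothesis supplies a position $j^*\ge 1$ in $c$ with $c|_{j^*..j^*+N-1}=b|_{j..j+N-1}$. If $j^*\ge w+1$, both blocks are the full canonical banded matrices and are identical; if $j^*\le w$, the $H(c)^+$-block is that same matrix with a few top rows removed, and deleting rows from a matrix can only decrease its smallest singular value (since $\|M'x\|\le\|Mx\|$ whenever $M'$ is obtained from $M$ by removing rows).

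The single subtle step is the last one: realizing that an occurrence of the subword close to the boundary of $c$ is actually \emph{harmless} rather than harmful, because the extra truncation only lowers the candidate $\nu_{j^*..j^*+N-1}(H(c)^+)$, which is exactly the side we want to be small in our lower bound on $\nu_N(H(b)^+)$. Combining the two cases gives $\nu_{j..j+N-1}(H(b)^+) \ge \nu_N(H(c)^+)$ for every admissible $j$, and taking $\inf_{j\ge 1}$ on the left concludes the proof.
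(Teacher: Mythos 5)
Your argument is correct and follows essentially the same route as the paper's: the same split of the starting columns into the boundary range $1..w$ (handled by the prefix equality $b|_{1..w+N-1}=c|_{1..w+N-1}$) and the bulk range $j\ge w+1$ (handled by $\W_N(b)\subseteq\W_N(c)$), resting on the same row-deletion monotonicity that underlies \eqref{eq:columns_cut}. The only cosmetic difference is that the paper cites the prepackaged identity \eqref{eq:nu+N} together with Proposition~\ref{prop:NCC}~(a), whereas you re-derive its content inline, including the (correctly resolved) observation that an occurrence of the matched subword at $j^*\le w$ in $c$ only lowers the $c$-side and is therefore harmless.
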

\begin{proof}
Let $N\in\NN$ and note that
\begin{align*}
\nu_N(H(b)^+) \ &\stackrel{\eqref{eq:nu+N}}=\ \min\left\{\min_{j=1}^w \nu_{j..j+N-1}(H(b)^+)\ ,\ \nu_N(H(b)|_\NN)\right\}\\
&\ \ge\ \min\left\{\min_{j=1}^w \nu_{j..j+N-1}(H(c)^+)\ ,\ \nu_N(H(c)|_\NN)\right\}\ \stackrel{\eqref{eq:nu+N}}=\ \nu_N(H(c)^+)
\end{align*}
since the two $\min_{j=1}^w \nu_{j..j+N-1}\dots$ terms are equal, by 
$b|_{1..w+N-1}\ =\ c|_{1..w+N-1}$ and 
$\nu_N(H(b)|_\NN)\ge\nu_N(H(c)|_\NN)$, by $\W_N(b)\subseteq\W_N(c)$ and Proposition~\ref{prop:NCC}~(a).
\end{proof}
For a direct comparison of $\nu(H(b)^+)$ and $\nu(H(c)^+)$ in the same style, 
we have to send $N\to\infty$, enforcing $b=c$ via $b|_{1..w+N-1}=c|_{1..w+N-1}$ for all $N\in\NN$. 
For an approximation $\nu(H(b_m)^+)\to\nu(H(b)^+)$ however, we have much less restrictive conditions (including pointwise convergence $b_m\to b$), see Theorem \ref{thm:approxspec+} below.

\subsection{Spectral approximation via approximation of submatrices}
We start with the general case, not necessarily generalized Schrödinger. In particular, many diagonals could be varying.
\begin{proposition} \label{prop:approxspec}
Let $A,A_1,A_2,\ldots$ be band operators on the axis with a uniform upper bound on their norms and on their band-widths.
If
\begin{equation} \label{eq:ev_equalsubwordsC}
\forall N\in\NN\ \exists m_0\in\NN:\ \forall m\ge m_0:\quad \C_N(A_m)\ =\ \C_N(A)
\end{equation}
then $\nu(A_m)\to\nu(A)$ and, in fact,
\[
\nu(A_m-\lambda)\ \to\ \nu(A-\lambda),\qquad\lambda\in\CC.
\]
\end{proposition}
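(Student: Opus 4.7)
My plan is to combine the uniform localization of the lower norm from Lemma \ref{lem:nuN} with the characterization of $\nu_N$ via column submatrices from Lemma \ref{lem:nuN-via-CN}. The key observation is that the condition \eqref{eq:ev_equalsubwordsC} makes $\C_N(A_m)$ eventually equal to $\C_N(A)$ for every fixed $N$, and since $\nu_N$ only depends on $N$-column submatrices, this makes $\nu_N(A_m) = \nu_N(A)$ eventually; Lemma \ref{lem:nuN} then transfers this from $\nu_N$ to $\nu$ with an error that can be made arbitrarily small.

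Fix $\lambda \in \CC$ and $\eps > 0$. First I would note that the operators $A - \lambda$ and $A_m - \lambda$ lie in a single uniform class: their band-widths coincide with those of $A$ and $A_m$ (so are bounded by some fixed $w$), and their norms are bounded by $r := \sup_m \|A_m\| + |\lambda| < \infty$. Apply Lemma \ref{lem:nuN} to this class with tolerance $\eps$ to obtain an $N \in \NN$ such that $\nu(B) \stackrel{\eps}{\preceq} \nu_N(B)$ holds simultaneously for $B = A - \lambda$ and $B = A_m - \lambda$, for every $m$.

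Next I would use that subtracting $\lambda \boldI$ shifts only the main diagonal by a constant, so $\C_N(A_m - \lambda) = \C_N(A) - \lambda \boldI_N$ (i.e.~the same shift applied to every element of $\C_N(A_m)$). In particular, $\C_N(A_m) = \C_N(A)$ implies $\C_N(A_m - \lambda) = \C_N(A - \lambda)$. Invoking hypothesis \eqref{eq:ev_equalsubwordsC} for this $N$ yields $m_0$ with $\C_N(A_m - \lambda) = \C_N(A - \lambda)$ for all $m \geq m_0$, and hence by Lemma \ref{lem:nuN-via-CN}
\[
\nu_N(A_m - \lambda) \ =\ \nu_N(A - \lambda), \qquad m \ge m_0.
\]

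The proof would close with a triangle-type estimate using $\stackrel{\eps}{\preceq}$: for $m \geq m_0$,
\[
\nu(A_m - \lambda) \ \stackrel{\eps}{\approx}\ \nu_N(A_m - \lambda) \ =\ \nu_N(A - \lambda) \ \stackrel{\eps}{\approx}\ \nu(A - \lambda),
\]
so that $|\nu(A_m - \lambda) - \nu(A - \lambda)| < 2\eps$. Since $\eps > 0$ was arbitrary, this gives the claimed convergence (and taking $\lambda = 0$ recovers $\nu(A_m) \to \nu(A)$). The only delicate point -- and the reason the hypothesis is stated with the quantifiers in the order ``$\forall N\, \exists m_0$'' rather than uniformly -- is the need to first choose $N$ (forced on us by Lemma \ref{lem:nuN}) and only then invoke \eqref{eq:ev_equalsubwordsC}; the uniformity of Lemma~\ref{lem:nuN} across the whole family $\{A_m - \lambda\}$ is what makes this work, and is the substantive ingredient being borrowed.
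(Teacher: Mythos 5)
Your proof is correct and follows essentially the same route as the paper: choose $N$ uniformly via Lemma~\ref{lem:nuN}, use \eqref{eq:ev_equalsubwordsC} to get $\nu_N(A_m-\lambda)=\nu_N(A-\lambda)$ eventually, and close with the $\stackrel{\eps}{\approx}$ triangle estimate. The only cosmetic differences are that the paper works with $\eps/2$ to land exactly at $\eps$, treats $\lambda=0$ first and then says ``repeat for $A-\lambda$,'' whereas you handle general $\lambda$ directly and make explicit the (correct) observation that $\C_N(A_m)=\C_N(A)$ implies $\C_N(A_m-\lambda)=\C_N(A-\lambda)$.
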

\begin{proof}
Let $\eps>0$ and take $N\in\NN$ so that Lemma~\ref{lem:nuN} applies, with $\frac{\eps}2$ in place of $\eps$, to $A$ and all $A_m$ with $m\in\NN$. This is possible since we have a uniform upper bound on their norms and on their band-widths.

Now, in accordance with $N$, take $m$ large enough that \eqref{eq:ev_equalsubwordsC} holds. 
Then, by Lemma~\ref{lem:nuN}, Proposition~\ref{prop:NCC}~(a) and again Lemma~\ref{lem:nuN}, in this order,
\[
\nu(A_m)\stackrel{\eps/2}\approx\nu_N(A_m)=\nu_N(A)\stackrel{\eps/2}\approx\nu(A),
\quad\text{so that}\quad
\nu(A_m)\stackrel\eps\approx\nu(A).
\]
Since $\eps$ was arbitrary, it follows that $\nu(A_m)\to\nu(A)$ as $m\to\infty$.
Repeating the same argument for $A_m-\lambda$ and $A-\lambda$ in place of $A_m$ and $A$ proves the claim.
\end{proof}

\subsection{Spectral approximation via approximation of subwords: the axis}
We directly conclude the same (and more) for a sequence of generalized Schrödinger operators on the axis.
Note that a similar result is already known in the self-adjoint case~\cite{Beckus.2020}.
\begin{theorem} \label{thm:approxspec}
Let $b,b_1,b_2,\ldots\in\S^\ZZ$ and look at the corresponding generalized Schrödinger operators on the axis, $A:=H(b)$ as well as $A_m:=H(b_m)$ for $m\in\NN$.
If
\begin{equation} \label{eq:ev_equalsubwords}
\forall N\in\NN\ \exists m_0\in\NN:\ \forall m\ge m_0:\quad \W_N(b_m)\ =\ \W_N(b)
\end{equation}
then, for all $\lambda\in\CC$ and $\eps>0$,
\[
\|(A_m-\lambda)^{-1}\|\ \to\ \|(A-\lambda)^{-1}\|
\qquad\text{and}\qquad
\sigma_\eps(A_m)\to\sigma_\eps(A)
\]
in Hausdorff distance, as $m\to\infty$.
If $A_m$ and $A$ are normal, one also has
\[
\sigma(A_m)\to\sigma(A).
\]
\end{theorem}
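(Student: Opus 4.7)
The plan is to reduce this theorem to the column-submatrix approximation result of Proposition~\ref{prop:approxspec}. The first step is to translate the subword hypothesis \eqref{eq:ev_equalsubwords} into its column-submatrix analogue \eqref{eq:ev_equalsubwordsC}: because the matrix of a generalized Schr\"odinger operator $H(b)$ has constant entries on every diagonal of $L$ and carries $b$ on the $\gamma$-th diagonal, an $N$-column submatrix is uniquely determined (up to a common row/column shift) by the $N$ consecutive entries of $b$ that it covers. Hence $\W_N(b_m)=\W_N(b)$ is equivalent to $\C_N(A_m)=\C_N(A)$. The uniform norm and band-width bounds required by Proposition~\ref{prop:approxspec} are automatic: $\|A_m\|$ is uniformly bounded because the alphabet $\S$ is compact, and all $A_m$ share the band-width $w$ of $L$.

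Proposition~\ref{prop:approxspec} then yields $\nu(A_m-\lambda)\to\nu(A-\lambda)$ for every $\lambda\in\CC$. To get the analogous statement for the adjoints, I would feed the hypothesis at parameter $N+2w$ into Proposition~\ref{prop:NCC}(c): for any $N$ and all $m$ sufficiently large, $\C_{N+2w}(A_m)=\C_{N+2w}(A)$ gives $\C_N(A_m^*)=\C_N(A^*)$. A second application of Proposition~\ref{prop:approxspec}, this time to $A_m^*,A^*$, then yields $\nu((A_m-\lambda)^*)\to\nu((A-\lambda)^*)$. Combining the two limits through~\eqref{eq:invnu},
\[
\|(A_m-\lambda)^{-1}\|\ \to\ \|(A-\lambda)^{-1}\|,\qquad \lambda\in\CC.
\]

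To pass from this pointwise convergence to Hausdorff convergence of the pseudospectra, I would write $\sigma_\eps(T)=\{\lambda:f_T(\lambda)<\eps\}$ where $f_T(\lambda):=1/\|(T-\lambda)^{-1}\|=\min\{\nu(T-\lambda),\nu((T-\lambda)^*)\}$. Each $f_T$ is $1$-Lipschitz in $\lambda$, since $|\nu(S-\lambda)-\nu(S-\mu)|\le|\lambda-\mu|$ for any bounded operator $S$. On the compact disk $K:=\{|\lambda|\le\sup_m\|A_m\|+\eps\}$, which contains every $\sigma_\eps(A_m)$ and $\sigma_\eps(A)$, Arzel\`a--Ascoli upgrades the pointwise convergence $f_{A_m}\to f_A$ to uniform convergence. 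From here a standard argument gives both Hausdorff inclusions: every $\lambda\in\sigma_\eps(A)$ lies in $\sigma_\eps(A_m)$ for large $m$, and any limit point $\lambda_0$ of a sequence $\lambda_m\in\sigma_\eps(A_m)$ satisfies $f_A(\lambda_0)\le\eps$ and so belongs to $\overline{\sigma_\eps(A)}$. For the normal case, the same equicontinuity argument applies to the $1$-Lipschitz functions $\lambda\mapsto\dist(\lambda,\sigma(A_m))=\nu(A_m-\lambda)$ via~\eqref{eq:nu_distspec}, yielding Hausdorff convergence of the spectra.

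I expect the main delicate point to be the last step of paragraph three: even after uniform convergence $f_{A_m}\to f_A$ on $K$, a limit point $\lambda_0$ of $\lambda_m\in\{f_{A_m}<\eps\}$ only yields $f_A(\lambda_0)\le\eps$, which a priori places $\lambda_0$ in $\{f_A\le\eps\}$, possibly strictly larger than $\overline{\sigma_\eps(A)}$. One must therefore argue that the level set $\{f_A=\eps\}$ is ``thin'' enough that $\overline{\{f_A<\eps\}}=\{f_A\le\eps\}$; this is the standard consequence of subharmonicity of $\lambda\mapsto\log\|(A-\lambda)^{-1}\|$ for a bounded operator with nonempty resolvent set. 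Everything else in the argument is routine bookkeeping around Proposition~\ref{prop:approxspec}.
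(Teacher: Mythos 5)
Your proposal is correct and follows essentially the same route as the paper: reduce to Proposition~\ref{prop:approxspec} via the correspondence $\C_N(H(b))\leftrightarrow\W_N(b)$, repeat for the adjoints, combine through \eqref{eq:invnu}, and then invoke the standard passage from pointwise resolvent-norm convergence to Hausdorff convergence of pseudospectra (which the paper simply cites, e.g.\ \cite{LiSchmeck:Haus}, rather than spelling out as you do), with \eqref{eq:nu_distspec} handling the normal case. The ``delicate point'' you flag -- that $\overline{\{f_A<\eps\}}=\{f_A\le\eps\}$, resting on the non-constancy of the resolvent norm on open sets in Hilbert space -- is exactly the content of those cited references, so your filled-in version is consistent with the paper's argument.
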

\begin{proof}
We apply Proposition \ref{prop:approxspec}, which is possible since all our operators have the same band-width and $\| A\|, \|A_m\|\le \|L\|+\max\limits_{\sigma\in\S} |\sigma|$ for all $m\in\NN$. 

With the same argument for the adjoints of our operators, we get, after recalling~\eqref{eq:invnu}, that $\|(A_m-\lambda)^{-1}\| \to \|(A-\lambda)^{-1}\|$ for all $\lambda\in\CC$.
Concluding Hausdorff convergence of the pseudospectra from here is a standard result
(e.g.~\cite[Section 2.3]{Colb:PE_pBC} or \cite{LiSchmeck:Haus}).
In general, one cannot conclude Hausdorff convergence of the spectra. But in the normal case
this is possible, due to \eqref{eq:nu_distspec}.
\end{proof}

\begin{remark}
For pseudoergodic $b\in\S^\ZZ$, it is well-known \cite{Davies2001:PseudoErg} that
\begin{equation} \label{eq:spec_PE}
\sigma(H(b))
\ =\ \bigcup_{c\in\S^\ZZ} \sigma(H(c)),
\end{equation}
and an important question, for example in \cite{MartinezThesis,LiBiDiag,CWDavies2011,HaggerPhD}, is whether restricting the union on the right to periodic $c\in\S^\ZZ$ yields a dense subset.
We are not quite answering this one, in large generality, but a related question:

In analogy to \eqref{eq:spec_PE}, one can prove (e.g. \cite{Chandler-WildeLindner.2016})
\begin{equation} \label{eq:epsspec_PE}
\sigma_\eps(H(b))\ =\ \bigcup_{c\in\S^\ZZ} \sigma_\eps(H(c))
\ \supset\!\!\! \bigcup_{\text{periodic }c\in\S^\ZZ} \sigma_\eps(H(c)),\qquad\eps>0.
\end{equation}
Our construction, approximating $\sigma_\eps(H(b))$ in Hausdorff distance by $\eps$-pseudospectra of $H(b_m)$ with periodic $b_m\in\S^\ZZ$, shows that the subset on the right of \eqref{eq:epsspec_PE} is dense in the set on the left, $\sigma_\eps(H(b))$. If $H(b_m)$ and $H(b)$ are normal then also the original question about spectra and \eqref{eq:spec_PE} is answered affirmatively.
\end{remark} 

Note that Theorem~\ref{thm:approxspec}, in particular~\eqref{eq:ev_equalsubwords}, does not require any convergence $b_m\to b$. This changes if we switch to the half-axis.

\subsection{Spectral approximation via approximation of subwords: the half-axis}
Here is the corresponding result for the half-axis, for the self-adjoint case see~\cite{Kellendonk.2019}.
\begin{theorem} \label{thm:approxspec+}
Let $\S\subseteq\CC$ be finite,
let $b,b_1,b_2,\ldots\in\S^\NN$ and look at the corresponding half-axis generalized Schrödinger operators $A^+ \coloneqq H(b)^+$ as well as $A_m^+ \coloneqq H(b_m)^+$ for $m\in\NN$.

If, again, \eqref{eq:ev_equalsubwords} holds and, in addition, $b_m\to b$ pointwise,
then, for all $\lambda\in\CC$ and $\eps>0$,
\[
\|(A_m^+-\lambda)^{-1}\|\ \to\ \|(A^+-\lambda)^{-1}\|
\qquad\text{and}\qquad
\sigma_\eps(A_m^+)\to\sigma_\eps(A^+)
\]
in Hausdorff distance, as $m\to\infty$. If $A_m^+$ and $A^+$ are normal, one also has
\[
\sigma(A_m^+)\to\sigma(A^+).
\]
\end{theorem}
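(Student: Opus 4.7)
The plan is to follow the blueprint of Theorem~\ref{thm:approxspec}: reduce the claim to showing $\|(A_m^+ - \lambda)^{-1}\| \to \|(A^+ - \lambda)^{-1}\|$ for each $\lambda \in \CC$, from which Hausdorff convergence of pseudospectra (and, in the normal case, of spectra via~\eqref{eq:nu_distspec}) follows from the same standard facts cited in that proof. Using~\eqref{eq:invnu}, it then suffices to prove $\nu(T_m^+)\to\nu(T^+)$ and $\nu((T_m^+)^*)\to\nu((T^+)^*)$, where $T := H(b) - \lambda$ and $T_m := H(b_m) - \lambda$.

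For the first convergence, I would fix $\eps > 0$ and use Lemma~\ref{lem:nuN}, together with uniform bounds on band-widths and operator norms, to pick $N \in \NN$ such that $\nu(T_m^+) \stackrel\eps\preceq \nu_N(T_m^+)$ holds uniformly in $m$ (and similarly for $T^+$). Then I apply formula~\eqref{eq:nu+N} of Proposition~\ref{prop:band+},
\[
\nu_N(T^+)\ =\ \min\Big\{\min_{j=1}^w \nu_{j..j+N-1}(T^+),\ \nu_N(T|_\NN)\Big\},
\]
and its counterpart for $T_m^+$, and show that the two right-hand sides coincide for $m$ large. The \emph{boundary} terms $\nu_{j..j+N-1}((\cdot)^+)$ with $j \in 1..w$ are smallest singular values of the finite submatrix described in~\eqref{eq:SV1}, which depends only on $b(1),\dots,b(w+N-1)$; since $\S$ is finite and $b_m \to b$ pointwise, $b_m$ and $b$ eventually agree on this initial segment, so these $w$ boundary terms match exactly for all large $m$. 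The \emph{bulk} term $\nu_N(T|_\NN) = \inf\{\nu(C) : C \in \C_N(T|_\NN)\}$ (Lemma~\ref{lem:nuN-via-CN}) is determined by the length-$N$ subwords of $b$ starting at positions $j \in \NN$, so hypothesis~\eqref{eq:ev_equalsubwords} (applied with a length adjusted by the band-width) forces equality of the bulk terms for $m \ge m_0(N)$.

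Combining these two observations yields $\nu_N(T^+) = \nu_N(T_m^+)$ for all sufficiently large $m$, hence $\nu(T^+) \stackrel{2\eps}\approx \nu(T_m^+)$; as $\eps$ was arbitrary, $\nu(T_m^+) \to \nu(T^+)$. The adjoint statement $\nu((T_m^+)^*)\to\nu((T^+)^*)$ is obtained by the same argument applied to $T^*, T_m^*$, which are again band operators of the same band-width whose nontrivial diagonal is algebraically determined by $b, b_m$; Proposition~\ref{prop:NCC}(c)--(d) transfers the bulk column-submatrix equality to the adjoint side, while the boundary terms again depend only on a finite initial slab where pointwise convergence plus finiteness of $\S$ gives exact agreement beyond some threshold.

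The main obstacle I foresee is the careful bookkeeping for the adjoint's bulk term: the adjoint operation mixes row and column indices and the half-axis restriction destroys the translation symmetry that made Theorem~\ref{thm:approxspec} easy, so one must verify that $\C_N((T_m^*)|_\NN) = \C_N(T^*|_\NN)$ really does follow from the column-submatrix equality on the axis. This should rest on the preservation of band-width under adjoints, together with the fact that all half-axis boundary effects remain trapped in a finite initial slab of the potential where pointwise convergence of $b_m$ to $b$ combined with discreteness of $\S$ gives exact equality for $m$ large enough.
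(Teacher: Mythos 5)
Your proposal is correct and follows essentially the same route as the paper: localize via Lemma~\ref{lem:nuN} uniformly in $m$, then use pointwise convergence plus finiteness of $\S$ to match the $w$ boundary terms and \eqref{eq:ev_equalsubwords} to match the bulk term in the decomposition \eqref{eq:nu+N}, for the operators and their adjoints alike, before concluding via \eqref{eq:invnu} and the standard Hausdorff-convergence results. The only difference is cosmetic: the paper packages the boundary/bulk matching as Proposition~\ref{prop:nu1sided} (applied in both directions to get $\nu_N(A_m^+)=\nu_N(A^+)$), whereas you unfold that argument inline.
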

\begin{proof}
The proof is almost identical to that of Theorem~\ref{thm:approxspec}: 
again, given $\eps>0$, choose $N\in\NN$ so that Lemma \ref{lem:nuN} holds with $\frac\eps2$ for $A^+$ and all $A_m^+$. 
Then, and this is a bit different, in accordance with $N$, take $m_0$ large enough that
\begin{itemize}
\item \eqref{eq:ev_equalsubwords} holds, as well as 
\item $b_m=b$ on $1..w+N-1$ for all $m\ge m_0$, 
\end{itemize}
the latter is possible since $b_m\to b$ pointwise and $\S$ is discrete.

Then, by Proposition~\ref{prop:nu1sided}, $\nu_N(A_m^+)=\nu_N(A^+)$, and we proceed as in the proof of Proposition \ref{prop:approxspec}.
\end{proof}

\subsection{Sufficient conditions for (\ref{eq:ev_equalsubwords})}
Condition \eqref{eq:ev_equalsubwords}, in short: 
\begin{center}
for all $N\in\NN$, eventually, as $m\to\infty$, $\W_N(b_m)=\W_N(b)$, 
\end{center}
plays a crucial role in Proposition \ref{prop:approxspec} and Theorems \ref{thm:approxspec} and \ref{thm:approxspec+}.
Here we discuss some arguably more handy criteria that are sufficient for \eqref{eq:ev_equalsubwords}. 
We look at the case of the whole axis but the half-axis case works the same way (with obvious modifications, like replacing $-r..r$ by $1..r$).

For aperiodic potentials and potentials generated by substitutions,
this condition can always be satisfied with periodic potentials $b_m$ that can be obtained constructively.
We will leave the details to~\cite{Beckus.2020} and~\cite{Gabel.2021}.

In the general case, we discuss the inclusions 
$\W_N(b_m)\supseteq\W_N(b)$ and $\W_N(b_m)\subseteq\W_N(b)$ separately because each is interesting in its own right: ``$\supseteq$'' guarantees a spectral inclusion of $\sigma(H(b))$, and ``$\subseteq$'' at least avoids spectral pollution.

For both inclusions, we make the following assumptions:
\begin{enumerate}[label=\arabic*.)]
\item \label{it:assumption1} $|\S|<\infty$,
\item [~] A bounded $\S\subseteq\CC$ is discrete if and only if it is finite (Bolzano-Weierstrass).
\item  \label{it:assumption2} $b_m\in\S^\ZZ$ for all $m\in\NN$,
\item [~] Together with 1.) and 3.) we also conclude $b\in\S^\ZZ$. 
\item  \label{it:assumption3} $b_m\to b$ pointwise.
\end{enumerate}
Note: In the half-axis case (Theorem \ref{thm:approxspec+}), we already make all three assumptions.
By~\ref{it:assumption1}~and~\ref{it:assumption3}, we have that
\begin{equation} \label{eq:agreelocally}
\forall r\in\NN\ \ \exists m(r):\ \forall m\ge m(r): \quad b_m|_{-r..r}=b|_{-r..r}\,.
\end{equation}

\subsection*{Sufficient conditions for $\W_N(b_m)\supseteq\W_N(b)$, eventually}
This inclusion holds without further conditions; 1.) -- 3.) are enough. 
Indeed:
 Let $N\in\NN$ and $w\in\W_N(b)$.
 Take $r\in\NN$ large enough for $w\in\W(b|_{-r..r})$.
 For $m\ge m_0:=m(r)$ from \eqref{eq:agreelocally} we then have $b_m|_{-r..r}=b|_{-r..r}$, whence $w\in\W_N(b_m)$.

\subsection*{Sufficient conditions for $\W_N(b_m)\subseteq\W_N(b)$, eventually}
This property is not for free.
As a negative example, look at $b_m:=\chi_{\{m\}}\to b\equiv 0$, so that subwords of $b_m$ containing ``$1$'' 
do not show in the limit $b$.
To rule out these kinds of examples, we will impose that each pattern that appears once in $b_m$ appears infinitely often (this is not enough yet, e.g.~$b_m=\chi_{2m\ZZ+m}\to b\equiv 0$) and that the gap between two occurrences remains bounded as $m\to\infty$. (see e.g.~\cite{DamanikLenz2006} and Remark 4.5 in \cite{Li:MinLimOps}).
 
For $u\in\S^\ZZ$, $N\in\NN$ and $w\in\W_N(u)$, recall 
\[
\mathrm{pos}(w,u)\ =\ \{k\in\ZZ: u(k)\cdots u(k+N-1)=w\}
\]
and further put
\begin{align*}
	\mathrm{gap}(w,u)&\ :=\ \min\{r\in\NN: \mathrm{pos}(w,u)+(-r..r)=\ZZ\},\\
	\mathrm{gap}(N,u)&\ :=\ \max\{\mathrm{gap}(w,u):w\in\W_N(u)\}.
\end{align*}
If $\mathrm{gap}(w,u)<\infty$,  we say that \emph{$w$ has bounded gaps in $u$}. 
Then $\mathrm{gap}(N,u)<\infty$ means that every subword of length $N$ has bounded gaps in $u$. (By 1., there are only finitely many $w\in\W_N(u)$, so that a uniform upper bound is automatic.)

The next step is to expect this property for all $b_m$, uniformly in $m$:
\begin{proposition}\label{prop:bounded_gaps}  
In addition to~\ref{it:assumption1}~--~\ref{it:assumption3}, assume
\begin{enumerate}[start=4, label=\arabic*.)]
\item\label{it:assumption4} $\quad\forall N\in\NN:\ \ g(N):=\sup\limits_{m\in\NN}\ \mathrm{gap}(N,b_m)<\infty$.
\end{enumerate}

Then, for all $N\in\NN$, $\W_N(b_m)\subseteq\W_N(b)$ holds for all sufficiently large $m$.

\end{proposition}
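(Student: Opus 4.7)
The plan is to exploit the bounded-gap hypothesis~\ref{it:assumption4} by observing that every length-$N$ subword occurring in $b_m$ is already forced to occur in a bounded window around the origin whose size depends only on $N$, not on $m$. The finite alphabet combined with pointwise convergence then forces $b_m$ and $b$ to coincide on any such fixed window for all sufficiently large $m$, which transfers the subword from $b_m$ over to $b$.

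Concretely, given $N\in\NN$, I would set $r := g(N)+N-1$, which is finite by~\ref{it:assumption4}, and then use~\eqref{eq:agreelocally} to pick $m_0 := m(r)$ so that $b_m|_{-r..r} = b|_{-r..r}$ for every $m\ge m_0$. Fix any such $m$ and any $w\in\W_N(b_m)$. Since $\mathrm{gap}(w,b_m)\le g(N)$, the definition of gap yields
\[
\mathrm{pos}(w,b_m)+(-g(N)..g(N))\ =\ \ZZ.
\]
Applied at the integer $0$, this produces some $k\in\mathrm{pos}(w,b_m)$ with $|k|\le g(N)$. Hence the occurrence interval $k..k+N-1$ is contained in $-g(N)..g(N)+N-1\subseteq -r..r$, and on that interval $b_m$ agrees with $b$. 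Reading off the entries, $w=b_m(k)\cdots b_m(k+N-1)=b(k)\cdots b(k+N-1)$, so $w\in\W_N(b)$.

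I do not expect a genuine obstacle here: the statement is essentially a bookkeeping exercise that aligns the gap bound $g(N)$ coming from~\ref{it:assumption4} with the window size $r$ provided by~\eqref{eq:agreelocally}. The one place where care is needed is that the gap bound must be \emph{uniform} in $m$; this uniformity is precisely the content of~\ref{it:assumption4} and is exactly what fails in the counterexamples $b_m=\chi_{\{m\}}$ and $b_m=\chi_{2m\ZZ+m}$ mentioned just before the proposition, where the nearest occurrence of a nonzero subword in $b_m$ drifts out of every window on which we have already established $b_m=b$.
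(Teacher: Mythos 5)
Your proof is correct and follows essentially the same route as the paper's: use the uniform gap bound from~\ref{it:assumption4} to force an occurrence of $w$ near the origin, then transfer it to $b$ via the window of agreement from~\eqref{eq:agreelocally}. If anything, your choice $r=g(N)+N-1$ is slightly more careful than the paper's condition $2r>g(N)$, since it explicitly guarantees that the whole occurrence interval $k..k+N-1$, and not just its left endpoint, lies inside $-r..r$.
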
\label{prop:bounded_gaps}
\begin{proof}
Let $N\in\NN$, $r\in\NN$ with $2r>g(N)$ from~\ref{it:assumption4}
and then take $m\ge m_0:=m(r)$ from~\eqref{eq:agreelocally}.
Now let $w\in\W_N(b_m)$.
By~\ref{it:assumption4}, we have 
\[
	\mathrm{gap}(w,b_m)\ \le\ \mathrm{gap}(N,b_m)\ \le\ g(N)\ <\ 2r.
\]
So there exists a $k\in \mathrm{pos}(w,b_m)\cap-r..r$
and therefore an occurrence of $w$ in $b_m|_{-r..r}$
and, by \eqref{eq:agreelocally}, also in $b|_{-r..r}$.
So $w\in\W_N(b)$.
\end{proof}

While Proposition~\ref{prop:bounded_gaps} might help to decide whether some given $b$ and $(b_m)$ satisfy \eqref{eq:ev_equalsubwords}, it does not help to construct $(b_m)$ from $b$.
Let us therefore sketch a straightforward way to create a sequence $(b_m)$ such that \eqref{eq:ev_equalsubwords} is always satisfied.
For that purpose we to restrict ourselves to the case where $\W(b)=\W(b|_\NN)$, in which case, by Lemma \ref{lem:auto_selfcontained}, $\#(w,b|_\NN) = \infty$ for all $w\in\W(b)$.

Let $N\in \NN$ and find $r\in\NN$ such that $\W_N(b) = \W_N(b|_{1..r})$.
Then take $\widetilde{r} \coloneqq \min\{\mathrm{pos}(b|_{1..N},b|_{r+1..}) \}-1$ and set $b_N$ to be the $\widetilde{r}$-periodic word repeating $b|_{1..\widetilde{r}}$.
It is easy to see that \eqref{eq:ev_equalsubwords} is always satisfied with $m_0=N$.

\subsection{Directions of extension} \label{sec:ext}
We have seen (Proposition \ref{prop:approxspec}, one-way model in Section \ref{sec:ex_math_phys}) that the limitation to just one varying diagonal was merely for convenience. But there is more room for extensions:
Instead of band operators on $\ell^2(\ZZ)$, we could pass to uniform limits of band operators on $\ell^p(\ZZ^d,X)$, where $p\in[1,\infty]$, $d\in\NN$ and $X$ is any Banach space. 
\begin{itemize}
\item How to generalize Lemma \ref{lem:nuN} from the set of band operators to its closure, the so-called \emph{band-dominated operators}~\cite{RaRoSi:Book,LiBook}, 
is shown in~\cite{BigQuest}.
\item The generalization from $p=2$ to arbitrary $p\in[1, \infty]$ is possible using so-called ${\mathcal P}$-theory~\cite{Sei:Survey}, 
but it does not change the spectra, see Lemma~\ref{prop:kurbatov}, as long as we move away from band operators not too far. 
Precisely, it holds in the so-called \emph{Wiener algebra}~\cite{Li:Habil}, which is somewhere between the classes of band and band-dominated operators.
In particular, Lemma~\ref{lem:nuN} is not restricted to the case $p=2$.
\item Passing from scalar to $X$-valued sequence spaces is again covered by ${\mathcal P}$-theory \cite{Sei:Survey} 
and enables us to study, for example, $L^p(\RR)$, by identifying it with $\ell^p(\ZZ,L^p[0,1])$.
\item Our tools (e.g.~Lemma~\ref{lem:nuN}) and ideas are in fact not limited to the 1D case, $d=1$. The case $d\ge 2$ just needs some obvious modifications:
\begin{itemize}
\item discrete intervals have to be replaced by, e.g.~cartesian products of discrete intervals or, more flexibly, 
by bounded sets $S\subseteq\ZZ^d$ for which $S+[-\frac12,\frac12]^d$ is connected in $\RR^d$;
\item a finite subword of $b\in\S^{(\ZZ^d)}$ is then $b|_S$ with $S$ from above;
\item in the same style, a finite submatrix of consecutive columns is then $(A_{i,j})_{i\in\ZZ^d,\,j\in S}$, 
truncated to the band part of $A$ and shifted to a common region;
\item the full space results are again straightforward,
\item for compressions to an infinite set $U\subseteq\ZZ^d$ (like the half-axis in 1D), 
the new Proposition~\ref{prop:nu1sided} will again have to require exactness in the $w$-neighborhood of the boundary of $U$ and a match of the sets of finite subwords otherwise.
\item Note however that Lemma~\ref{lem:self2} does not transfer to $d\ge 2$ since the main results of~\cite{Sei:semi} rely on 1D. 
(Also see Example 30 in~\cite{Sei:Survey}.)
\end{itemize}
\end{itemize}

\section*{Acknowledgement}
The authors would like to thank Dennis Clemens, Pranshu Gupta, Alexander Haupt and Anusch Taraz for providing stimulating discussions about de Bruijn sequences and Sturmian words.

\end{document}